%\documentclass{gtart}
%\agtart
\documentclass{amsart}
\usepackage{amsmath,amssymb}
\usepackage{graphicx}
\newcommand{\R}{\mathbb R}
\newcommand{\Z}{\mathbb Z}

\newtheorem{thm}{Theorem}[section]
\newtheorem{lem}[thm]{Lemma}
\newtheorem{prop}[thm]{Proposition}
\newtheorem{conj}[thm]{Conjecture}
\newtheorem{cor}[thm]{Corollary}
\theoremstyle{definition}
\newtheorem{ex}[thm]{Example}
\theoremstyle{definition}

\theoremstyle{remark}
\newtheorem{rem}[thm]{Remark}
\newcommand{\lk}{\operatorname{lk}}
\def\sgn{\operatorname{sign}}
\def\sminus{\smallsetminus}
\def\<{\langle}
\def\>{\rangle}
\def\d{\partial}
\def\A{\mathcal{A}}

\def\D{\nabla}
\def\O{\Omega}
\def\S{\mathcal{S}}

\begin{document}

\title{Alexander-Conway invariants of tangles}
\author{Michael Polyak}
\address{Department of mathematics, Technion, Haifa 32000, Israel}
\email{polyak@math.technion.ac.il}

\begin{abstract}
We consider an algebra of (classical or virtual) tangles over an
ordered circuit operad and introduce Conway-type invariants of
tangles which respect this algebraic structure. The resulting
invariants contain both the coefficients of the Conway polynomial
and the Milnor's $\mu$-invariants of string links as partial cases.
The extension of the Conway polynomial to virtual tangles satisfies
the usual Conway skein relation and its coefficients are GPV finite
type invariants.
As a by-product, we also obtain a simple representation of the braid
group which gives the Conway polynomial as a certain twisted trace.
\end{abstract}

\thanks{Partially supported by the ISF grant 1343/10}
\subjclass[2010]{57M25; 57M27} \keywords{tangles, virtual links,
Gauss diagrams, Conway polynomial}

\maketitle

\section{Introduction}
The Alexander polynomial $\Delta_L(t)\in\Z[t,t^{-1}]$ of a link $L$ in $\R^3$
is one the most celebrated and well-studied link invariants. A number of
different definitions and approaches to $\Delta_L(t)$ are known (see e.g.
\cite{Ro, Tu}) and it is related to a variety of interesting objects and
constructions.
The Alexander polynomial has reappeared time and again in all major
developments in knot theory of the last decades: quantum invariants,
finite type invariants, and, lately, the theory of knot Floer homology
(see e.g. \cite{OS}).

In its original form, $\Delta_L(t)$ is only defined up to multiplication by
powers of $t$. Its close relative, the Conway polynomial
$\D(L)=\sum_n c_n(L)z^n\in\Z[z]$,
is free of this indeterminacy.
The Conway polynomial may be obtained from $\Delta_L(t)$ by a
substitution $z=t^{\frac12}-t^{-\frac12}$ and is completely
determined by its normalization $\D(O)=1$ on the unknot $O$ and
the Conway skein relation
$$\D(L_+)-\D(L_-)=\D(L_0),$$
\noindent
for any triple $L_+$, $L_-$ and $L_0$ of link diagrams, which look as shown
in Figure \ref{fig:Conway_triple} in a certain disk $B$
and coincide outside this disk.

\begin{figure}[htb]
%\vspace{1in}
\centerline{\includegraphics[width=3in]{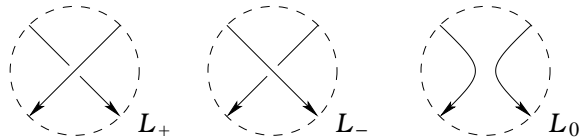}}
\caption{\label{fig:Conway_triple} Conway skein triple}
\end{figure}
The importance of the Conway skein relation was realized after the appearance
of the Jones polynomial, and led to the discovery of the HOMFLY polynomial
\cite{HOMFLY}.

The Alexander-Conway polynomial reappeared in the theory of quantum invariants
(where it turned out to be related to the quantum supergroup $U_q(gl(1|1))$, and also
to $U_q(sl(2))$ at roots of unity), and later in the theory of Vassiliev knot invariants,
since all coefficients of $\D(L)$ are finite type invariants (see \cite{BL}), with
$c_n(L)$ being an invariant of degree $n$.

Separate coefficients $c_n(L)$ of the Conway polynomial also attracted a lot of
attention, with coefficients of low degrees been extensively studied. E.g., for knots,
$c_2(L)$ is the Casson knot invariant. For 2-component links $c_1(L)$ is the linking
number of two components, and $c_3(L)$ is the Sato-Levine invariant.
For algebraically split 3-component links $c_2(L)$ is Milnor's triple linking number.
More generally, the first non-vanishing coefficient of $\D(L)$ for $n$-component
links is a certain combination of linking numbers \cite{Ho}; for algebraically split links
the first non-vanishing coefficient is a certain combination of Milnor triple linking
numbers \cite{Le, MV, Tr}. Coefficients $c_m(L)$ of a link $L$ are related to those
of a knot obtained by ``banding together'' components of $L$ (see \cite{Le}).

After the development of the theory of virtual knots (see \cite{Ka}), a number
of attempts was made to extend the definition of the Alexander or the Conway
polynomial to the virtual case using one of the original approaches for classical
links. In particular, J. ~Sawollek \cite{Sa} constructed a polynomial which,
however, vanishes on classical links.
One of the possible paths to pursue is that of the quantum invariants, or Fox
differential calculus. This allows one to generalize the Alexander polynomial to
(virtual or classical) tangles, but results in polynomials which are not invariant
under the first Reidemeister move and do not satisfy the skein relation
(e.g., a recent construction of Archibald and Bar-Natan \cite{ABN}).
We are unaware of any generalizations of the Alexander-Conway polynomial
to virtual links or tangles, which would coincide with the original polynomial
on classical links and satisfy the skein relation.

With this goal in mind, we follow the approach of Chmutov, Khouri and Rossi
\cite{CKR}, who use a tautological state sum model of Jaeger \cite{Ja} to
deduce Gauss diagram formulas for the coefficients $c_n(L)$ of $\D(L)$.
We extend and modify their construction to ordered tangles and provide a
direct proof of the invariance under the Reidemeister moves. This enables
us to extend the coefficients $c_n(L)$ to the virtual case as well.  The resulting
invariants are of finite type in the GPV sense \cite{GPV} (and thus change
under Kauffman's virtualization move \cite{Ka}).

We consider ordered tangles as an algebra over an ordered circuit operad.
Our Conway-type invariants of tangles respect this algebraic structure.
This allows us to ``break'' any complicated tangle into elementary fragments,
so all proofs and calculations may be done for elementary tangles.
The resulting invariants contain the coefficients of the Conway polynomial
of (long) links as a partial case.
For string links we use a certain shifted ordering to obtain Milnor's triple and
quadruple linking numbers.  We conjecture that all Milnor's homotopy
$\mu$-invariants of string links may be obtained in this way.
As a by-product, we also obtain a simple representation of the braid group,
which gives the Conway polynomial as a certain twisted trace. Due to the
Conway skein relation, this representation factors through the Hecke algebra.

The paper is organized in the following way.
In Section \ref{sec:tangles} we review classical and virtual ordered tangles.
Section \ref{sec:circuits} is dedicated to the ordered circuit operad and its
relation to ordered tangles.
In Section \ref{sec:invts} we introduce Conway-type invariants of tangles
and then discuss their properties in Section \ref{sec:properties}.

\section{Preliminaries}\label{sec:tangles}

\subsection{Classical and virtual tangles}
Let $B^3$ be the unit 3-dimensional ball in $\R^3$. An {\em
$n$-tangle} in $B^3$ is a collection $S$ of $n$ disjoint oriented
intervals and some number of circles, properly embedded in $B^3$ in
such a way, that the endpoints of each interval belong to the set
$X=\{x_k\}_{k=1}^{2n}$, where $x_k$ are some prescribed points on
the boundary of $B^3$. For example, one may choose points $x_k$ on
the great circle $z=0$, say, $x_k=(\exp(k\pi i/2n),0)$ for odd $k$
and $x_k=(\exp(-(k-1)\pi i/2n),0)$ for even $k$.

Tangles are considered up to an oriented isotopy in $B^3$, fixed on
the boundary.
%The set $S$ of the intervals and circles parameterizing  the tangle is called the {\em skeleton}.
We will call embedded intervals and circles the {\em strings} and the
{\em closed components} of a tangle, respectively. We will always
assume that the only singularities of the projection of a tangle to
the $xy$-plane are transversal double points. Such a projection,
enhanced by an indication of an over/underpass in each double point,
is called a {\em tangle diagram}. For technical reasons we will also
often fix a base point (distinct from the endpoints of strings) on
the boundary circle of the diagram. See Figure \ref{fig:tangles}a.

\begin{figure}[htb]
%\vspace{1in}
\centerline{\includegraphics[width=4.0in]{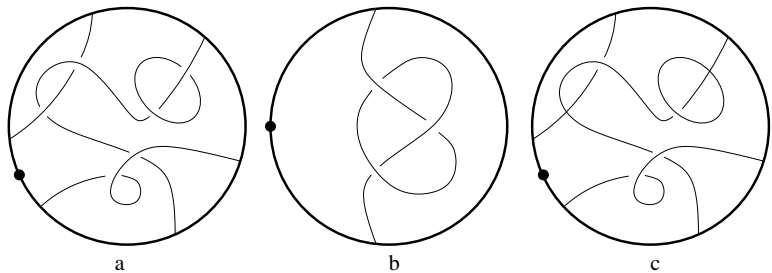}}
\caption{\label{fig:tangles} Classical and virtual tangle diagrams}
\end{figure}

Tangles generalize many objects, commonly considered in knot theory.
In particular, tangles which have no strings are usual links in $B$.
Tangles with one string and no closed components are {\em long
knots}, see Figure \ref{fig:tangles}b. Pure tangles without closed
components are called {\em string links}. Here a tangle is {\em
pure}, if the endpoints of $k$-th string, $k=1,2,\dots,n$ are
$x_{2k-1}$ and $x_{2k}$. A particular example of a pure tangle is
the unit tangle, with every pair $x_{2k-1}$ and $x_{2k}$ connected
by an interval.
%See Figure \ref{fig:tangles2}.
Braids are tangles such that each tangle component intersects every
plane $y=c$, $c\in[0,1]$ in at most one point.

Virtual tangles present a useful generalization of tangles in the
framework of the virtual knot theory \cite{Ka}. In addition to usual
crossings of a tangle diagram, one considers a new -- virtual --
type of crossings. We will draw virtual crossings as double points
without any indication of the over- or underpass, see Figure
\ref{fig:tangles}c. Virtual tangle diagrams are considered up to
the classical Reidemeister moves (see Figure \ref{fig:reidem})
together with an additional set of virtual moves, shown in Figure
\ref{fig:virtual}.

\begin{figure}[htb]
%\vspace{1in}
\centerline{\includegraphics[width=3.8in]{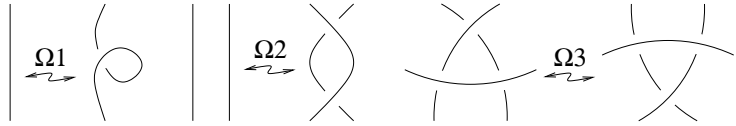}}
\caption{\label{fig:reidem} Classical Reidemeister moves}
\end{figure}
\begin{figure}[htb]
%\vspace{1in}
\centerline{\includegraphics[width=5in]{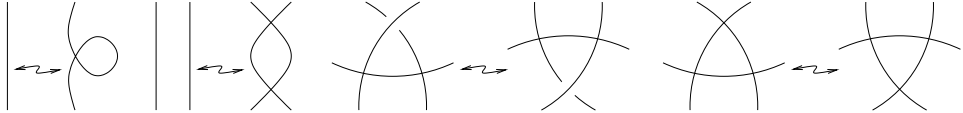}}
\caption{\label{fig:virtual} Virtual Reidemeister moves}
\end{figure}

Throughout the paper we will assume that all strings and closed
components are oriented. Also, further we will consider only tangles
with at least one string. To obtain invariants of closed classical
links, we will pick one of the components, cut it open, and consider
the resulting tangle with one string. We will then argue that the
result does not depend on the choice.
All our constructions will work both for classical and for virtual
tangles.

\subsection{Ordered tangles}\label{sub:order}
Let $T$ be a tangle with $n$ strings. Since we assume that all
strings are oriented, we can distinguish two endpoints of a string:
its {\em input} (or source), and its {\em output} (or target).
Denote by $\d^- T$ and $\d^+ T$ sets of inputs and outputs of all
strings of $T$, respectively. A tangle $T$ is {\em ordered}, if the
set $X=\cup_{i=1}^{2n}x_i=\d^- T\cup \d^+ T$ is numbered by a
collection $\{j_1<j_2<\dots<j_{2n}\}$ of integers, so that each
input is numbered by $j_{2k-1}$, and each output is numbered by
$j_{2k}$ for some $k=1,2\dots,n$. See Figure \ref{fig:ordering}.

\begin{figure}[htb]
%\vspace{1in}
\centerline{\includegraphics[width=4.0in]{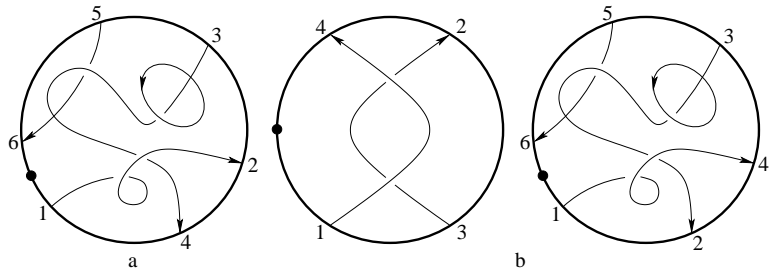}}
\caption{\label{fig:ordering} Coherent and non-coherent tangle
orderings}
\end{figure}

Orderings by different sets of integers which are related by a
monotone map will be considered equivalent. Note that closed
components do not appear in the definition of an ordering. Thus
ordered closed links are usual links with no additional data, and
1-string tangles (in particular, long knots) have a unique ordering.
We will call an ordering of a tangle $T$ {\em coherent}, if for
every $k=1,2,\dots,n$ a string with the input numbered by $j_{2k-1}$
has its output ordered by $j_{2k}$, see Figure \ref{fig:ordering}a.
For pure tangles (in particular string links) the standard ordering,
such that $x_k$ is labeled by $k$, is coherent.

\section{Circuit operad and tangles}\label{sec:circuits}

\subsection{Circuit diagrams}
A circuit operad \cite{ABN} is a modification of a planar tangles
operad \cite{J}, adjusted for virtual tangles instead of classical
tangles. It may be useful to look at Figure \ref{fig:circuit} for
some examples of circuit diagrams before reading the formal
definition below.

\begin{figure}[htb]
%\vspace{1in}
\centerline{\includegraphics[width=4.0in]{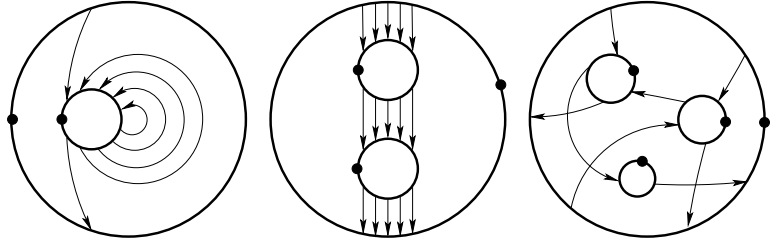}}
\caption{\label{fig:circuit} ``Trace'', ``composition'', and general
circuit diagrams}
\end{figure}

An {\em $n$-circuit diagram} $C$, $n>0$, is the unit disk $D_0$ in
$\R^2$ with a (possibly empty) collection of disjoint subdisks
$D_1,D_2,\dots,D_k$ in the interior of $D_0$. The boundary $\d D_0$
of the disk $D_0$ is called the {\em output} of $C$, and the union
of boundaries $\d D_i$ of $D_i$, $i=1,2,\dots,k$ is the {\em input}
of $C$. Each disk $D_i$ has an even number $2n_i$ of distinct marked
points on its boundary (with $n=n_0$). Each boundary circle $\d
D_i$, $i=0,1,\dots,k$ is based, i.e., equipped with a base point
distinct from the marked points; we will denote it by $*_i$. The set
of all marked points is equipped with a matching, i.e., is split in
$N=n_0+n_1+\dots+n_k$ disjoint pairs. A pair is called {\em
external}, if at least one of its points lie on the output $D_0$,
and is {\em internal} otherwise. It is convenient to think about
each pair as a simple path in the complement $D\sminus\cup_{i=1}^k
D_k$ of the internal disks, connecting the corresponding marked
points. These connections represent only the matching of marked
points, but not the actual path. Actual paths, as well as their
intersections, are irrelevant, so we will treat these intersections
as virtual crossings. Circuit diagrams are considered up to
orientation-preserving diffeomorphisms. If paths in a circuit
diagram may be realized without intersections in
$D\sminus\cup_{i=1}^k D_k$, we recover planar tangle diagrams of
\cite{J}.

Two circuit diagrams $C$ and $C'$ with the appropriate number of
boundary points may be composed into a new circuit diagram $C\circ_i
C'$ as follows. Isotope $C'$ so that the output $\d D'_0$ of $C'$,
together with the set of marked points and the base point, coincides
with the input $D_i$ of $C$. Glue $C'$ into the internal disk $D_i$
of $C$ (smoothing near the marked points so that the paths of $C$
and $C'$ meet smoothly) and remove the common boundary. See Figure
\ref{fig:compose}. This composition defines a structure of a colored
operad on circuit diagrams.

\begin{figure}[htb]

%\vspace{1in}
\centerline{\includegraphics[width=4.1in]{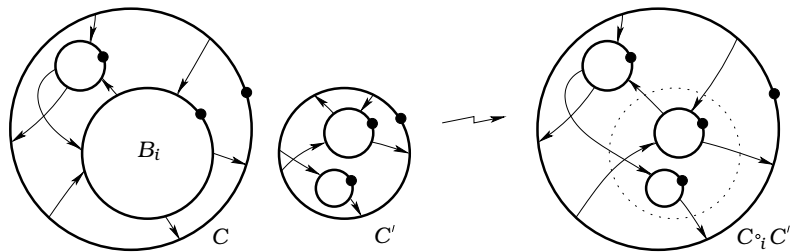}}
\caption{\label{fig:compose} Composing circuits}
\end{figure}

Further we will consider ordered oriented circuits. A circuit
diagram is {\em oriented}, if all paths connecting pairs of marked
points are oriented. In other words, each matched pair $e$ of marked
points is ordered: $(s(e),t(e))$ -- the source and the target of
$e$, respectively. The number of inputs and outputs on each circle
$\d D_i$ are required to be equal (and thus equal to $n_i$). See
Figures \ref{fig:circuit}, \ref{fig:compose}. Compositions of
oriented tangles should respect orientations of the paths.

\subsection{Ordered circuit diagrams}
% All basepoints $*_i$ with $i=1,\dots,k$ should be outputs of some
% paths. The basepoint $*_0$ should be an input of some path, and if
% the path starting at $*_0$ ends on an internal circle $\d D_i$, it
% should end in $*_i$. See Figure \ref{fig:circuit}.
An oriented circuit is {\em ordered}, if the set of paths is
ordered, i.e., the set of all pairs of matched marked points is
ordered: $(s_1,t_1),\dots,(s_N,t_N)$. We assume that two following
conditions hold. Firstly, points $s_1$ and $t_N$ should be on the
output $\d D_0$ of $C$. Secondly, both points $t_i$, $s_{i+1}$,
$i=1,2,\dots,N-1$ should be on the boundary of the same disk of $C$.
See the rightmost picture of Figure \ref{fig:order}.

An ordering of all external paths in $C$ is called an {\em external
ordering}, and an ordering of all internal paths in $C$ is called an
{\em internal ordering} of $C$. Given a (complete) ordering of $C$,
its restriction to external and internal paths defines an external
and an internal orderings of $C$, respectively. Vice versa, given both
an internal and an external ordering of $C$, one may construct
different complete orderings of $C$ as shuffles of these two partial
orderings, see Figure \ref{fig:order}.

\begin{figure}[htb]
%\vspace{1in}
\centerline{\includegraphics[width=4.0in]{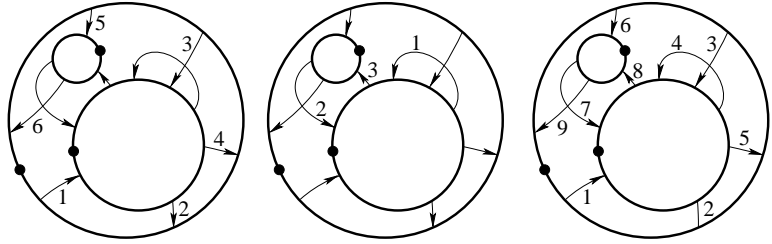}}
\caption{\label{fig:order} External, internal, and complete
orderings of a circuit}
\end{figure}

To define a composition $C\circ_i C'$ of ordered circuits $C$ and
$C'$, we require the following compatibility of orderings. A {\em
circuit} of length $l$ in a circuit diagram $C$ is a sequence
$(s_j,t_j),\dots,(s_{j+l},t_{j+l})$ of paths, such that both
endpoints $s_j$ and $t_{j+l}$ lie on the output $\d D_0$, while
the rest of the endpoints in this sequence lie on the inputs
$\cup_{i=1}^k\d D_i$. For example, an ordered circuit diagram in
Figure \ref{fig:order} contains three circuits of lengths $2$,
$3$, and $4$, respectively. If a sequential pair $t_i$ and $s_{i+1}$
of marked points on $\d D_i$ are identified with a pair $s'_{j}$ and
$t'_{j+l}$ of marked points on $\d D'_0$, then we require that
$s'_{j}$ and $t'_{j+l}$ are endpoints of a circuit in $C'$. The
ordering on $C\circ C'$ is then induced from orderings on $C$ and
$C'$.
%See Figure \ref{fig:compose_order}.

%\begin{figure}[htb]
%\vspace{1in}
%\centerline{\includegraphics[width=5in]{compose_order.eps}}
%\caption{\label{fig:compose_order} Composition of ordered circuits}
%\end{figure}

\subsection{Inserting tangles in circuit diagrams}
\label{sub:orderCT}
Given an $n$-circuit diagram $C$ with $k$ inputs and a $k$-tuple of
tangles $T_1,\dots,T_k$, we may create a new tangle
$C(T_1,\dots,T_k)$ if the data on the boundaries match. Isotope an
$n_i$ tangle $T_i$ to such a position, that its boundary circle
coincides with the input circle $\d D_i$ of $C$ and the endpoints of
strings in $T_i$ coincide with the marked points on $\d D_i$ (if $C$
and $T_i$ are oriented, we also require a match of orientations).
Glue $T_i$ into the internal disk $D_i$ of $C$ along the boundary,
removing common boundary circles and thinking about paths in the
complement $D\sminus\cup_{i=1}^k D_k$ as a virtual tangle diagram.
Doing this for all disks $D_i$, we obtain a new virtual $n_0$-tangle
$C(T_1,\dots,T_k)$ in the disk $D_0$. See Figure
\ref{fig:composition}.

\begin{figure}[htb]
%\vspace{1in}
\centerline{\includegraphics[width=4.2in]{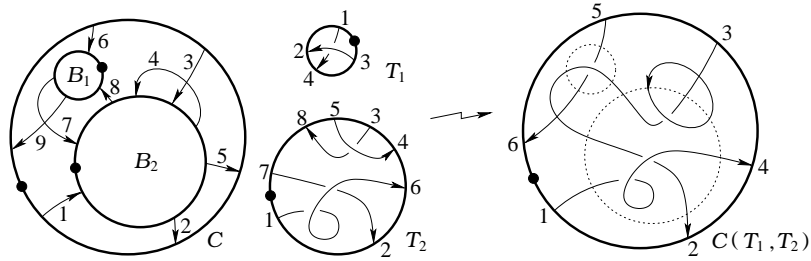}}
\caption{\label{fig:composition} Inserting tangles in a circuit}
\end{figure}

Note that in order to define an ordering of the resulting tangle
$C(T_1,\dots,T_k)$, we need to fix only an external orientation on
$C$; $T_i$'s need not to be ordered. Moreover, if $C$ has a
(complete) ordering, it induces an ordering on each tangle $T_i$.
Indeed, an ordering of $C$ defines a numbering
$i_1,o_1,\dots,i_N,o_N$ of the set of all marked points, which
induces an ordering of marked points on the boundary circle $\d
D_i$, and thus an ordering of $T_i$. If all $T_i$'s and $C$ are
ordered, for the composition to be defined we require that this
induced ordering should coincide with that of $T_i$. See Figure
\ref{fig:composition}.

This operation gives to virtual tangles a structure of an algebra
over the circuit operad, similar to the usual case of classical
tangles as an algebra over the planar operad.

\section{Invariants of ordered tangles}\label{sec:invts}

\subsection{States of tangle diagrams}\label{sub:states}
Let $D$ be an ordered tangle diagram. An {\em $n$-state} of $D$ is a
collection of $n$ crossings of $D$. A state $S$ of $D$ defines a new
tangle diagram $D(S)$, obtained from $D$ by smoothing all crossings
of $S$ respecting the orientation, see Figure \ref{fig:smooth}a. The
smoothed diagram inherits an ordering from $D$. We will say that the
state $S$ is {\em coherent}, if $D(S)$ contains no closed components
and the ordering of $D(S)$ is coherent (see Section
\ref{sub:order}), i.e. if both ends of each string numbered by
$j_{2i-1}$ and $j_{2i}$ for some $i$. Suppose that $S$ is coherent.
As we follow $D(S)$ along the first string of $D(S)$ (starting from
its input and ending in its output), then continue to the second
string of $D(S)$ in the same fashion, etc., we pass a neighborhood
of each smoothed crossing $s\in S$ twice. A (coherent) state $S$ is
{\em descending}, if we enter this neighborhood first time on the
(former) overpass of $D$, and the second -- on the underpass. See
Figure \ref{fig:smooth}b. The sign $\sgn(S)$ of $S$ is defined as
the product of signs (local writhe numbers) of all crossings in $S$.

\begin{figure}[htb]
%\vspace{1in}
\centerline{\includegraphics[width=4.0in]{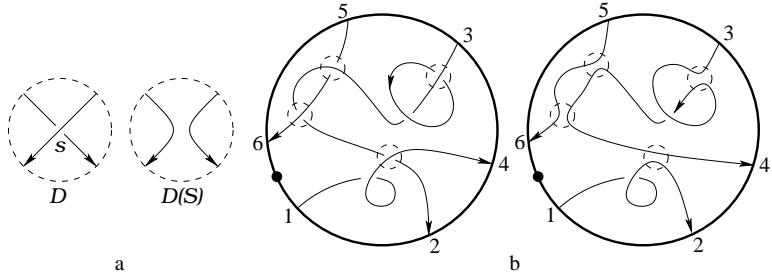}}
\caption{\label{fig:smooth} Smoothing crossings which give a
descending state}
\end{figure}

\begin{rem}
Property of coherency of a state depends on the ordering of a
diagram $D$. Given a state $S$ such that $D(S)$ contains no closed
components, there are $n!$ orderings of $D$ (which differ by
reorderings of the $n$ strings of $D(S)$) for which $S$ is coherent.
To estimate the number of orderings for which such a state $S$ is
descending, construct a graph with vertices corresponding to
components (both open and closed) of $D$, with two vertices
connected by an edge if $S$ contains a crossing between two
corresponding components of $D$. If the number of connected
components of this graph is $c$, there are at most $c!$ orderings of
$D$ for which $S$ may be descending. In particular, if the graph is
connected, there is at most one ordering for which $S$ may be
descending.
\end{rem}

\subsection{Conway-type invariants of ordered tangles}
\label{sub:conway}
Denote by $\S_n(D)$ the set of all descending $n$-states of a
diagram $D$. Define $c_n(D)\in\Z$ and $\D(D)\in\Z[z]$ by
$$c_n(D)=\sum_{S\in\S_n(D)}\sgn(S)\,,\quad \D(D)=\sum_{n=0}^\infty c_n(D) z^n$$
In particular, $c_0(D)=1$ if $D$ has no closed components and the
ordering of $D$ is coherent (indeed, in this case the empty state
$S=\emptyset$ is descending), and $c_0(D)=0$ otherwise.

\begin{ex}\label{ex:unlink}
For a trivial tangle diagram $D_0$ which consists of one straight
string, the only descending state is trivial, thus $\D(D_0)=1$.
Also, an addition of a kink to a string of a tangle does not change
the value of $\D$ (since the new crossing cannot enter a coherent
state). In particular, for a diagram $D'_0$ obtained from $D_0$ by
an addition of a small kink, we have $\D(D'_0)=1$. If a tangle
diagram $D_{split}$ is split -- i.e., it can be subdivided into two
non-empty non-intersecting parts -- then there are no coherent (and
thus no descending) states, thus $\D(D_{split})=0$.
\end{ex}

\begin{ex}\label{ex:O2}
Let $D_a$, $D_b$, $D_c$ and $D_d$ be the diagrams shown in Figure
\ref{fig:O2}. The only descending states for $D_a$ and $D_b$ are
trivial, so $\D(D_a)=\D(D_b)=1$. For $D_c$ there are no descending
states, so $\D(D_c)=0$. For $D_d$ there are two descending 1-states,
but their signs are opposite, so $\D(D_d)=z-z=0$.
\end{ex}

\begin{figure}[htb]
%\vspace{1in}
\centerline{\includegraphics[width=4.8in]{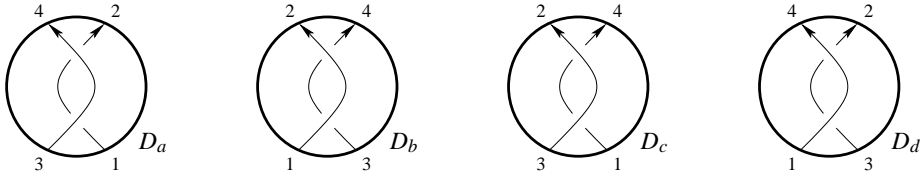}}
\caption{\label{fig:O2} Tangle diagrams with two strings}
\end{figure}

\begin{ex}
For a diagram $D_1$ of a long Hopf link in Figure \ref{fig:example}a
there is only one descending state $\{2\}\in\S_1(D_1)$. Thus
$\D(D_1)=\pm z$ (depending on the orientation of the closed
component). For the diagram $D_2$ of a long trefoil in Figure
\ref{fig:example}b there is only one non-trivial descending state
$\{2,3\}\in\S_2(D_2)$, so $\D(D)=1+z^2$.  For the diagram $D_3$ of a
long iterated Hopf link in Figure \ref{fig:example}c, there are
three non-trivial descending states: $\{2\},\{4\}\in \S_1(D)$, and
$\{2,3,4\}\in \S_3(D)$. Thus $\D(D_3)=\pm (2z+z^3)$ (depending on
the orientation of the closed component).
\end{ex}

\begin{figure}[htb]
%\vspace{1in}
\centerline{\includegraphics[width=4.2in]{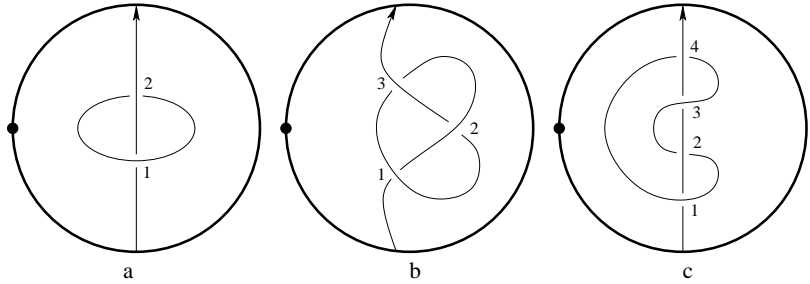}}
\caption{\label{fig:example} More tangle diagrams}
\end{figure}

Let $C$ be on oriented circuit diagram with an external ordering.
Let $T_1$,\dots,$T_k$ be oriented tangles such that the composition
$C(T_1,\dots,T_k)$ is defined. Note that the external ordering of
$C$ makes $C(T_1,\dots,T_k)$ into an ordered tangle. Now, suppose
that a (complete) ordering $or$ of $C$ extends the given external
ordering. As discussed in Section \ref{sub:orderCT}, $or$ induces an
ordering on each tangle $T_i$; denote the resulting ordered tangle
by $T_i^{or}$. Directly from the definition of $c_n$ we conclude
that $\D$ behaves multiplicatively under tangle compositions:
\begin{lem}\label{lem:prod}
We have
$$\D(C(T_1,\dots,T_k))=\sum_{or}\prod_{k=1}^k\D(T_i^{or})\, ,$$
where the summation is over all orderings $or$ of $C$, extending the
initial external ordering.
\end{lem}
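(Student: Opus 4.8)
The plan is to argue directly from the definition of $\D$, by exhibiting a sign- and degree-preserving bijection between the descending states of the composite diagram $D:=C(T_1,\dots,T_k)$ and the pairs consisting of a complete ordering $or$ of $C$ (extending the given external ordering) together with a choice of descending state of each piece $T_i^{or}$.

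First I would record the local structure of states. Because the paths of the circuit $C$ meet only in virtual crossings, the classical crossings of $D$ are precisely the disjoint union of the crossings of $T_1,\dots,T_k$; hence every state $S$ of $D$ splits uniquely as $S=S_1\sqcup\cdots\sqcup S_k$ with $S_i$ a state of $T_i$. Smoothing is local, so $D(S)=C(T_1(S_1),\dots,T_k(S_k))$, and since inserting a tangle changes neither a crossing nor its local writhe, the sign and the number of crossings are multiplicative: $\sgn(S)=\prod_i\sgn(S_i)$ and $|S|=\sum_i|S_i|$. Writing $\D(D)=\sum_S\sgn(S)\,z^{|S|}$ with the sum over all descending states, this already factorizes the summand; it remains to match the index sets.

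The heart of the proof---and the step I expect to be the main obstacle---is the bijection itself. Starting from a descending state $S$ of $D$, the diagram $D(S)$ has no closed components, so tracing its strings in the induced order traverses every path of $C$ exactly once and records a sequence $(s_1,t_1),\dots,(s_N,t_N)$. I would check that this sequence is a complete ordering $or$ extending the external ordering: the endpoints $s_1,t_N$ lie on $\d D_0$ since each string begins and ends on the output, and each pair $t_m,s_{m+1}$ lies on a common disk because between two consecutive circuit paths the trace runs along a single string of some $T_i(S_i)$. Restricting $or$ to $\d D_i$ gives the ordering of $T_i^{or}$, and the care here is to verify that coherence of $S$ is equivalent to coherence of every $S_i$ for $T_i^{or}$: each visit to $D_i$ enters at an input and leaves at its paired output, so a coherent $S$ forces the strings of $T_i(S_i)$ to connect exactly the marked points that the induced ordering pairs, and conversely a complete ordering prescribes this connection pattern on each $\d D_i$ in advance, so coherent states of the $T_i^{or}$ reassemble into a coherent $S$ with no closed components. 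These two constructions are mutually inverse, which is the delicate bookkeeping I would need to carry out in full.

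Finally I would transfer the descending condition and conclude. Restricted to $D_i$, the global trace visits the strings of $T_i(S_i)$ in the very order dictated by $T_i^{or}$, its only extra excursions (through $C$ and the other pieces) meeting no crossing of $S_i$; hence at each smoothed crossing of $S_i$ the over- and underpasses are met in the same order globally as locally, so $S$ is descending precisely when every $S_i$ is. Summing over the bijection and using multiplicativity of the sign and degree then gives
$$\D(D)=\sum_{S}\sgn(S)\,z^{|S|}=\sum_{or}\prod_{i=1}^{k}\Big(\sum_{S_i}\sgn(S_i)\,z^{|S_i|}\Big)=\sum_{or}\prod_{i=1}^{k}\D(T_i^{or}),$$
where $or$ runs over the orderings of $C$ extending the external ordering, as claimed.
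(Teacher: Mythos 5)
Your proof is correct and takes essentially the paper's approach: the paper offers no detailed argument at all, asserting the lemma ``directly from the definition of $c_n$'', and your write-up supplies precisely the details implicit in that assertion. The three ingredients you isolate --- the splitting of every state of $C(T_1,\dots,T_k)$ into states of the pieces with multiplicative sign and additive degree, the trace of the smoothed diagram inducing the unique complete ordering $or$ extending the external one, and the equivalence of the descending condition globally and piecewise --- are exactly the bookkeeping the paper treats as immediate.
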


\begin{thm}\label{thm:main}
Let $D$ be a diagram of an ordered (classical or virtual) tangle
$T$. Then $\D(T)=\D(D)$ defines an invariant of ordered tangles.
\end{thm}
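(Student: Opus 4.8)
The plan is to prove that $\D(D)$ is unchanged under each of the three classical Reidemeister moves (and, in the virtual setting, under the virtual moves as well), since any two diagrams of the same ordered tangle are connected by a finite sequence of such moves. Because $\D(D)=\sum_n c_n(D)z^n$ and each $c_n$ counts signed descending $n$-states, it suffices to establish a bijective (sign- and degree-preserving) correspondence between the descending states of the two diagrams involved in each move. The moves are local, taking place inside a small disk $B$, so I would set up the correspondence on states according to how many of their crossings lie inside $B$, matching states that agree outside $B$ and analyzing the finitely many local configurations inside.

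For the first Reidemeister move, the added crossing is a self-crossing of a single string, and Example \ref{ex:unlink} already records the key observation: a kink crossing can never enter a coherent state, so it lies in no descending state. Hence states of the kinked diagram are in obvious sign-preserving bijection with states of the unkinked one, and $\D$ is unaffected. For the second Reidemeister move, the two new crossings have opposite signs; I would show that the only way they can participate in a descending state is \emph{together}, and that when they do the resulting smoothing either reintroduces the two strands unchanged or creates a configuration that fails coherency/descent, so their net signed contribution cancels. This matching of the ``both-in'' states with cancelling signs, combined with the identity bijection on states using neither crossing, gives invariance.

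The third Reidemeister move is where the real work lies, and I expect it to be the main obstacle. Here three crossings are rearranged and none of them can simply be discarded, so the bijection between descending states must carefully track, for each subset of the three crossings chosen into a state, how the smoothing pattern, the coherency condition, and crucially the descending condition (who is overpass and who is underpass as one traverses the strings in the prescribed order) transform under the move. I would organize this by casework on which of the three triangle crossings are smoothed, verifying in each case that descent and sign are preserved and that the global traversal order is respected; the delicate point is that the descending condition is a \emph{global} ordering-dependent property, so I must check that altering the local picture does not change which strand is visited first along any string passing through $B$.

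To reduce this global bookkeeping to a genuinely local computation, I would invoke the operadic structure and Lemma \ref{lem:prod}: present the portion of $D$ inside $B$ as the insertion of an elementary tangle into a fixed circuit $C$ with the rest of $D$ as the remaining inputs. Since $\D$ is multiplicative over $C$ with the external ordering held fixed, and the Reidemeister move changes only the inserted elementary tangle, invariance of $\D(D)$ follows once I verify $\sum_{or}\prod_i \D(T_i^{or})$ is unchanged when the elementary fragment is replaced by its Reidemeister-equivalent fragment. This localizes every case to a comparison of $\D$ on small elementary tangles summed over the shuffles of internal orderings, so the coherency-and-descent analysis need only be carried out for these elementary pieces rather than for arbitrary diagrams. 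The same strategy handles the virtual moves, where the smoothing and descent conditions are manifestly insensitive to virtual crossings, making those cases routine.
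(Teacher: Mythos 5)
Your overall strategy coincides with the paper's: both invoke Lemma \ref{lem:prod} to write the two diagrams as compositions $C(T_1,T_2)$ and $C(T'_1,T_2)$ with the move confined to an elementary inserted tangle, reducing everything to a finite check on elementary tangles over all orderings (the paper additionally cites \cite{Po2} to limit this to four generating oriented moves), and the virtual moves are indeed vacuous since virtual crossings never enter states. The gap is in the local verifications themselves: your description of which states survive is wrong in both nontrivial cases, and those descriptions are the actual content of the proof.

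For $\O2$ you claim the two bigon crossings can participate in a descending state only \emph{together}, and that these ``both-in'' states cancel. The truth is the reverse, so the step you propose to prove would fail. The 2-state containing both crossings never contributes: with antiparallel strands its oriented smoothing produces a closed component, violating coherency, while with parallel strands the same strand is the overpass at both crossings, so whichever string is traversed first enters one of the two crossing neighborhoods on an underpass, violating descent. What actually happens (this is Example \ref{ex:O2}, diagram $D_d$) is that for a suitable ordering each single crossing forms a descending 1-state on its own, and these two 1-states carry opposite signs and cancel, giving $z-z=0$; note also that a single state cannot ``cancel'' by itself --- cancellation is always between two states. For $\O3$ you propose a sign-preserving bijection between descending states of the two diagrams, organized by which of the three crossings lie in the state. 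This works for 0- and 1-states, and 3-states never descend, but it fails for 2-states: as the paper observes (Figure \ref{fig:O3}), descending 2-states, when an ordering admits them, come in pairs of opposite sign \emph{within one and the same diagram} and cancel internally; the two sides of the move need not have equal numbers of descending 2-states, so no sign-preserving bijection across the move need exist. The equality is one of signed sums, proved by internal cancellation, not by matching states one-to-one.
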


\begin{proof}
Let us prove the claim by checking the invariance of $c_n(D)$ under
the Reidemeister moves $\O1-\O3$. The results of \cite{Po2} imply
that all oriented Reidemeister moves are generated by 4 moves shown
in Figure \ref{fig:reidem_tangles} below.
\begin{figure}[htb]
%\vspace{1in}
\centerline{\includegraphics[width=4.6in]{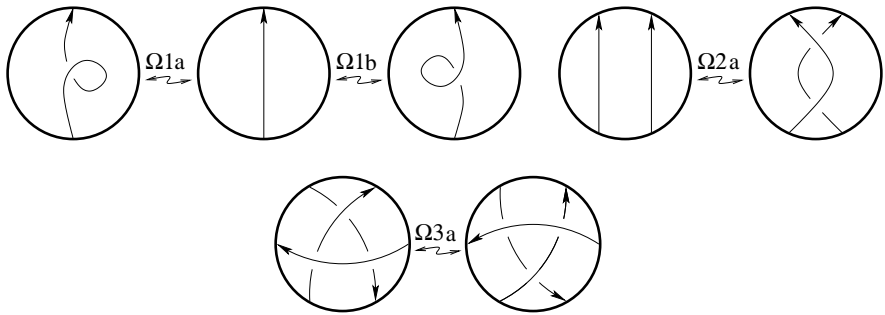}}
\caption{\label{fig:reidem_tangles} Elementary tangles related by Reidemeister moves}
\end{figure}

Due to Lemma \ref{lem:prod}, it suffices to verify the claim only on
pairs of elementary tangle diagrams of Figure \ref{fig:reidem_tangles}
for different orderings. Indeed, a pair of ordered tangles which are
related by an oriented Reidemeister move in a certain disk $B$ can
be presented as compositions $C(T_1,T_2)$ and $C(T'_1,T_2)$ with
$T_1$ and $T'_1$ being elementary tangle diagrams inside the disk
$B$, and $T_2$ being a tangle in a disk surrounding all other classical
crossings, as illustrated in Figure \ref{fig:compReidem}.

\begin{figure}[htb]
%\vspace{1in}
\centerline{\includegraphics[width=4.2in]{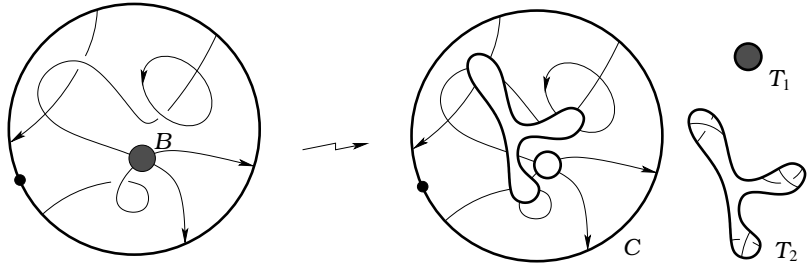}}
\caption{\label{fig:compReidem} Decomposing tangles into pieces}
\end{figure}

If a pair of ordered diagrams differ by $\O1$, the equality
$\D(T_1)=\D(T'_1)$ follows from Example \ref{ex:unlink}. A similar
equality for $\O2$ follows from Example \ref{ex:O2}.

Finally, if $T$ and $T'$ are two ordered tangle diagrams related
by $\O3$, there is an obvious bijective correspondence between
all descending 0- and 1-states of $T$ and $T'$; also, there are no
descending 3-states. This implies the required equality for every
ordering for which there are no descending 2-states of $T$ and
$T'$. But if for a certain ordering such states exist, they appear in
pairs with opposite signs, and thus cancel out, see Figure \ref{fig:O3}.

\begin{figure}[htb]
%\vspace{1in}
\centerline{\includegraphics[width=4.2in]{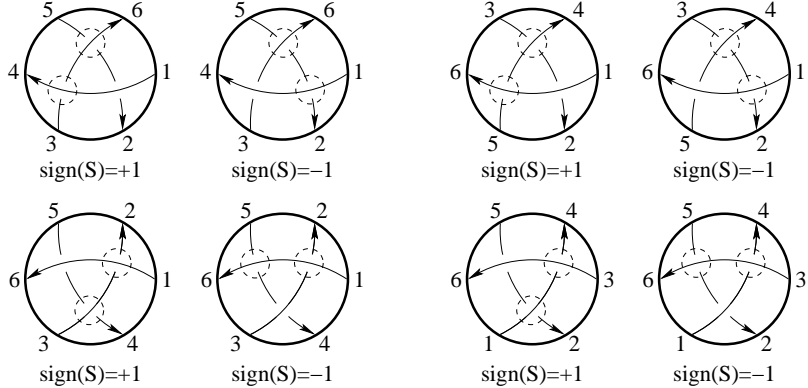}}
\caption{\label{fig:O3} Canceling pairs of descending states}
\end{figure}

\end{proof}

\section{Properties of the invariants}\label{sec:properties}

\subsection{Skein relation}
Our choice of notation $\D$ for the invariant of Theorem \ref{thm:main}
is explained in the following proposition:
\begin{prop}\label{prop:skein}
Let $T_+$, $T_-$ and $T_0$ be ordered tangles which look as shown in
Figure \ref{fig:skein}a inside a disk and coincide (including
orderings) outside this disk. Then the Conway skein relation holds:
$$\D(T_+)-\D(T_-)=z\D(T_0)$$
\end{prop}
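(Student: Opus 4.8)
The plan is to prove the relation directly from the state-sum definition of $\D$ via a sign-matching bijection, letting $c$ denote the single crossing inside the distinguished disk. Observe first that $T_+$ and $T_-$ have identical underlying projections, differing only in the over/under datum at $c$, so their crossing sets---and hence their sets of states---are canonically identified. I would then partition the states $S$ of $T_\pm$ according to whether $c\in S$, and treat the two classes separately.

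For states with $c\notin S$, note that the orientation-preserving smoothing $T_+(S)$ equals $T_-(S)$ as diagrams except at the unsmoothed crossing $c$. Since coherency of $D(S)$ and the descending condition refer only to the smoothed crossings of $S$ together with the traversal of the strings---and this traversal is insensitive to the over/under datum at an unsmoothed crossing---such an $S$ is descending in $T_+$ if and only if it is descending in $T_-$, with the same $\sgn(S)$. Hence every state avoiding $c$ cancels in the difference $\D(T_+)-\D(T_-)$.

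For states with $c\in S$, write $S=S'\cup\{c\}$ with $c\notin S'$. The key point is that the oriented smoothing at $c$ does not depend on the sign of the crossing, so $T_+(S)=T_-(S)=T_0(S')$ as diagrams; consequently coherency and the descending conditions at the crossings of $S'$ hold simultaneously or fail simultaneously for all three. What differs is the descending condition at $c$ itself: since the overpass and underpass at $c$ are interchanged between $T_+$ and $T_-$, the requirement that we first traverse the overpass of $c$ holds for exactly one of $T_+,T_-$. Combined with $\sgn_{T_\pm}(S)=\pm\sgn(S')$ and the subtraction in $\D(T_+)-\D(T_-)$, in either case such an $S$ contributes exactly $+\sgn(S')\,z^{|S'|+1}=z\cdot\sgn(S')\,z^{|S'|}$ to the difference. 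The assignment $S\mapsto S'$ is a bijection from these states onto the descending states $S'$ of $T_0$, because the conditions ``$S=S'\cup\{c\}$ is descending for one of $T_\pm$'' (coherency, descending at $S'$, and descending at $c$ for one sign) reduce precisely to the conditions ``$S'$ is descending in $T_0$'' (coherency and descending at $S'$). Summing over these contributions yields $\D(T_+)-\D(T_-)=z\sum_{S'}\sgn(S')\,z^{|S'|}=z\D(T_0)$.

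I expect the delicate step to be the local analysis at $c$: verifying carefully that interchanging the overpass and underpass toggles the descending condition at $c$ between $T_+$ and $T_-$ while leaving coherency and all other descending conditions fixed, and that the factor $\sgn(c)=\pm1$ conspires with the minus sign of the difference to produce a uniformly positive contribution $z\,\sgn(S')\,z^{|S'|}$. As an alternative route, one could write $T_\pm=C(X_\pm,T_2)$ and $T_0=C(X_0,T_2)$ for a circuit $C$ and the elementary one-crossing tangles $X_+,X_-,X_0$, and reduce via Lemma \ref{lem:prod} exactly as in the proof of Theorem \ref{thm:main}; but the direct bijection above avoids checking the identity for all induced orderings.
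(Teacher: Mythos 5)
Your proof is correct, but it follows a genuinely different route from the paper's. The paper invokes the multiplicativity property (Lemma \ref{lem:prod}): it writes $T_\pm=C(X_\pm,T_2)$ and $T_0=C(X_0,T_2)$ for the elementary one-crossing tangles $X_+,X_-,X_0$, reduces the skein relation to these elementary tangles equipped with each of the four possible orderings $(1324)$, $(1342)$, $(3124)$, $(3142)$, and verifies the identity by a small table of values of $\D$ --- exactly the alternative you sketch in your last sentence and then set aside. Your argument instead works globally on the state sum: states avoiding the distinguished crossing $c$ cancel between $\D(T_+)$ and $\D(T_-)$ because neither coherency, nor the descending condition, nor the sign sees the over/under datum at an unsmoothed crossing; states containing $c$ satisfy the descending condition at $c$ for exactly one of $T_\pm$ (since the traversal of $T_\pm(S)=T_0(S\smallsetminus\{c\})$ is the same and the overpass/underpass labels at $c$ are interchanged), and the sign $\sgn_{T_\pm}(c)=\pm1$ combines with the minus sign in the difference so that either case contributes $+\sgn(S')z^{|S'|+1}$; the map $S\mapsto S'=S\smallsetminus\{c\}$ is then a sign-preserving bijection onto the descending states of $T_0$. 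I checked the delicate points you flag and they do hold: $T_\pm(S)=T_0(S')$ as ordered diagrams, so coherency and the descending conditions away from $c$ transfer verbatim. What each approach buys: the paper's reduction is shorter and reuses machinery needed elsewhere (the same scheme proves Theorem \ref{thm:main}), but its correctness rests on Lemma \ref{lem:prod} and on enumerating the admissible orderings; your bijection is self-contained, avoids the case analysis over induced orderings entirely, and makes transparent why the relation holds with coefficient exactly $z$ --- at the cost of redoing by hand, inside this one proof, the local analysis that the operadic decomposition localizes once and for all.
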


\begin{figure}[htb]
%\vspace{1in}
\centerline{\includegraphics[width=4.0in]{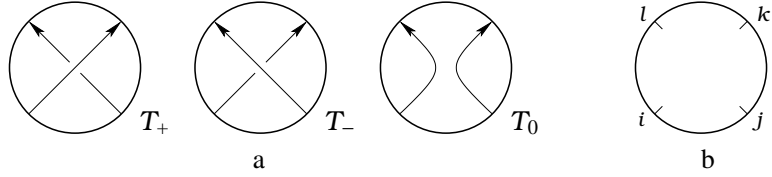}}
\caption{\label{fig:skein} Skein relation for tangles}
\end{figure}

\begin{proof}
Due to the multiplicativity property of Lemma \ref{lem:prod}, it
suffices to establish the equality for the three tangles of Figure
\ref{fig:skein}a equipped with one of the four possible orderings
shown in Figure \ref{fig:skein}b, with $(i,j,k,l)$ being $(1,3,2,4)$,
$(1,3,4,2)$, $(3,1,2,4)$, or $(3,1,4,2)$. The corresponding values
of $\D$ are shown in the table below.
%
%\vspace{1in}
$$\begin{array}{|c||c|c|c|c|}
(ijkl) & (1324) & (1342) & (3124) & (3142) \\ \hline
\hline \makebox(0,12){} \D(T_+) & 1 & z & 0 & 1 \\
\hline \makebox(0,12){} \D(T_-) & 1 & 0 & -z & 1 \\
\hline \makebox(0,12){} \D(T_0) &  0 & 1 & 1 &  0
\end{array}
$$
\end{proof}

The above proposition, together with the normalization of Example
\ref{ex:unlink} on split tangles and on the trivial 1-string tangle,
imply the following corollary:
\begin{cor}[c.f. \cite{CKR}]\label{cor:Conway}
Let $T$ be a classical tangle with one string\footnote{Recall that
for a tangle with one string there is only one possible ordering.}
and $m-1$ closed components. Denote by $\bar{T}$ a closed
$m$-component link, obtained from $T$ by the braid-type closure.
Then $\D(T)$ is the Conway polynomial of $\bar{T}$.
\end{cor}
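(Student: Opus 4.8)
The plan is to deduce the corollary from the uniqueness of the classical Conway polynomial: it is the unique invariant of oriented links taking the value $1$ on the unknot and satisfying the skein relation $\D(L_+)-\D(L_-)=z\D(L_0)$. Writing $\D(\bar T)$ for this classical polynomial of the link $\bar T$ (as opposed to our tangle invariant $\D(T)$), I would fix one diagram $D$ of the one-string tangle $T$ and prove the diagram-level identity $\D(D)=\D(\bar D)$. Since $\D(D)=\D(T)$ is independent of the diagram by Theorem \ref{thm:main}, while $\D(\bar D)=\D(\bar T)$ is independent of the diagram by isotopy invariance of the Conway polynomial, this identity yields the corollary. The point of passing to a fixed diagram is that the braid-type closure introduces no new crossings, so the crossings of $D$ are exactly the crossings of $\bar D$, and a single induction can drive both sides simultaneously.

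First I would record that the two sides obey the same recursion. A crossing change or an oriented smoothing performed inside $B^3$ commutes with the closure: a skein triple $T_+,T_-,T_0$ of one-string tangles closes to a link skein triple $\bar T_+,\bar T_-,\bar T_0$. Hence Proposition \ref{prop:skein} gives $\D(D_+)-\D(D_-)=z\D(D_0)$ for the tangle invariant, while the defining relation of the Conway polynomial gives the identical relation for $\D(\bar D_\pm)$ and $\D(\bar D_0)$. Here I note that the oriented smoothing of any crossing of a one-string tangle is again a one-string tangle (only the number of closed components can change), so $D_0$ stays in the relevant class and carries its unique ordering; the endpoints of the string are untouched, so the orderings agree outside the disk and Proposition \ref{prop:skein} indeed applies.

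Next I would set up the induction on the pair $(n,u)$, where $n$ is the number of crossings of $D$ and $u=u(D)$ is the number of crossings one must switch to make $D$ descending, with respect to the ordering that traverses the string first and then the closed components (with chosen base points). In the base case $u=0$ the diagram $D$ is descending, hence isotopic to a split, trivial diagram; by Example \ref{ex:unlink} we get $\D(D)=1$ when $T$ has no closed components ($m=1$) and $\D(D)=0$ once a closed component splits off ($m\ge 2$). On the other side $\bar D$ is a descending link diagram, so $\bar T$ is the $m$-component unlink, whose Conway polynomial is $1$ for $m=1$ and $0$ for $m\ge 2$; thus the two sides agree. For the inductive step I would choose a bad crossing $c$: switching it lowers $u$ by one while keeping $n$ fixed, and smoothing it lowers $n$. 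Applying the common skein relation at $c$ together with the inductive hypothesis on the switched and smoothed diagrams then gives $\D(D)=\D(\bar D)$.

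The step I expect to be the main obstacle is the careful bookkeeping of the base case across the closure, since an oriented smoothing can change the number $m$ of components (for instance, a self-crossing of the string splits off a closed component). What makes this harmless is that the values of both invariants on unlinks are themselves forced by the shared skein relation together with the unknot normalization, so the two recursions cannot drift apart; Example \ref{ex:unlink} merely confirms the tangle side independently. Beyond this, the only input specific to tangles is the observation that the braid-type closure is crossingless, so that the classical uniqueness induction for $\D(\bar T)$ and the state-sum computation of $\D(T)$ are literally the same induction.
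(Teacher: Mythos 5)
Your proposal is correct and follows essentially the same route as the paper: the paper deduces the corollary from Proposition \ref{prop:skein} (the shared skein relation) together with the normalizations of Example \ref{ex:unlink} on the trivial $1$-string tangle and on split tangles, which is exactly the uniqueness induction you carry out. You have merely made explicit the standard descending-diagram induction that the paper leaves implicit.
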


\subsection{Relation to Milnor's $\mu$-invariants}
For $n>1$ invariants $c_n(T)$ include well-known Milnor's $\mu$-invariants
of link-homotopy \cite{Mi} and their generalizations. In particular, let $T$
be a tangle with $n$ ordered strings and no closed components.
Denote by $T^{\sigma}$ the tangle $T$ equipped with an ordering
which is obtained from the standard one by a cyclic permutation
$\sigma=(246\dots 2n)$, i.e. such that the source and the target of the
$k$-th string are numbered by $j_{2k-1}$ and $j_{2k+2}$, respectively,
for $k=1,\dots,n-1$, and the source and the target of the last string are
numbered by $j_{2n-1}$ and $j_2$. See Figure \ref{fig:mu}.
Denote $\nu_{12\dots n}(T)=c_{n-1}(T^{\sigma})$.

\begin{figure}[htb]
%\vspace{1in}
\centerline{\includegraphics[width=4.4in]{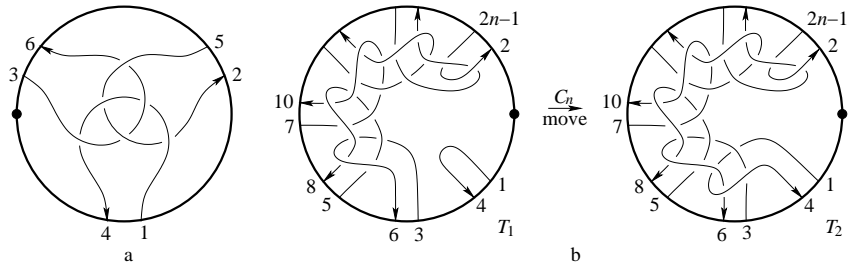}}
\caption{\label{fig:mu} Tangles with a shifted ordering}
\end{figure}

The invariant $\nu_{12\dots n}$ detects Goussarov-Habiro's $C_n$ move
\cite{Gu, Ha}: an easy calculation shows that for tangles $T_1$ and $T_2$
depicted in Figure \ref{fig:mu}b we have $\nu_{12\dots n}(T_1)=0$,
$\nu_{12\dots n}(T_2)=1$.

\begin{conj}
Modulo lower degree invariants, $\nu_{12\dots n}$ coincide with Milnor's
link homotopy $\mu$-invariant\footnote{or rather, its generalization
of \cite{KP} to (classical or virtual) tangles} $\mu_{2\dots n,1}$. In
particular, if a closure $\hat{T}$ is a Brunnian link, so that $n$-th
degree $\bar{\mu}$-invariants of $\widehat{T}$ are well-defined, we have
$\nu_{12\dots n}(T)=\bar{\mu}_{12\dots n}(\widehat{T})$.
\end{conj}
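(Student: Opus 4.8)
The plan is to treat both quantities as finite type invariants and to compare their top-degree symbols, since the phrase ``modulo lower degree invariants'' is precisely the assertion that two finite type invariants of the same degree have the same weight system. By construction $\nu_{12\dots n}(T)=c_{n-1}(T^{\sigma})$, and the coefficients $c_m$ are GPV finite type invariants with $c_m$ of degree $m$; hence $\nu_{12\dots n}$ is a finite type invariant of $n$-string string links of degree $n-1$. On the other side, Milnor's invariant $\mu_{2\dots n,1}$ has $n$ distinct indices and is classically (Bar-Natan, Lin) a finite type invariant of degree $n-1$, so the two candidates sit at the same filtration level and the comparison ``modulo lower degree'' is meaningful.

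First I would pin down the weight system of $\nu_{12\dots n}$. The shifted ordering is engineered so that coherence of a smoothing forces each resulting string to run from the source of string $i$ to the target of string $i-1$ (cyclically), which can only be produced by smoothing crossings between consecutive strands. I would show that, modulo states that factor through fewer than $n$ strands (and so contribute only to lower-degree invariants, or cancel in pairs exactly as in the proof of Theorem \ref{thm:main}), the leading part of $c_{n-1}(T^{\sigma})$ is supported on tree Jacobi diagrams carrying one univalent vertex on each of the $n$ strands, and that the cyclic pattern of $\sigma$ isolates the single caterpillar tree whose legs meet the strands in the order $2,3,\dots,n,1$. This tree has degree $n-1$, matching the filtration count.

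Next I would invoke the diagrammatic description of Milnor invariants due to Habegger and Masbaum: modulo lower order, $\mu_{2\dots n,1}$ extracts exactly this linear tree from the tree part of the Kontsevich integral, i.e. its weight system is supported on that same caterpillar. Matching the two weight systems gives $\nu_{12\dots n}\equiv\mu_{2\dots n,1}$ modulo lower degree. As a normalization I would use the $C_n$-move computation already recorded here, $\nu_{12\dots n}(T_1)=0$ and $\nu_{12\dots n}(T_2)=1$: since $T_1$ and $T_2$ differ by one $C_n$-move and the Goussarov--Habiro clasper calculus \cite{Gu, Ha} identifies the $C_n$-graded quotient of string links with tree claspers, this fixes the proportionality constant between the two weight systems to be $1$ on the relevant generator. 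For the Brunnian closure, the indeterminacy disappears: if $\widehat{T}$ is Brunnian then all proper subsequences of indices give vanishing $\mu$, so $\mu_{2\dots n,1}$ descends to the honest closed-link invariant $\bar{\mu}_{12\dots n}(\widehat{T})$ with no ambiguity (Habegger--Lin), the cyclic relabeling $\mu_{2\dots n,1}=\mu_{12\dots n}$ holds modulo the vanishing lower invariants, and the congruence becomes the asserted equality $\nu_{12\dots n}(T)=\bar{\mu}_{12\dots n}(\widehat{T})$.

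The main obstacle is the weight-system computation of the second paragraph: one must prove that \emph{no} other tree (and, a priori, no non-tree) Jacobi diagram enters the leading symbol of $c_{n-1}(T^{\sigma})$, so that the shifted ordering really selects one caterpillar rather than some integer combination of trees identified under IHX. Controlling all non-chain descending states -- showing each either drops in filtration or cancels against another -- together with the sign and coefficient bookkeeping against the Habegger--Masbaum normalization, is exactly the delicate point that keeps the statement a conjecture; a fully rigorous treatment would likely require reformulating the descending-state sum as a graphical calculus in which the IHX and antisymmetry relations are manifest.
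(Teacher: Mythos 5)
This statement is not proved in the paper at all: it is stated as a conjecture, and the text immediately following it says that ``the general structure of the lower degrees correction terms remains unclear.'' So there is no proof to compare against, and your proposal --- which you yourself concede in the final paragraph leaves the decisive step unresolved --- does not close the gap either. But beyond being incomplete, your strategy rests on a premise that the paper's own computations contradict.

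You read ``modulo lower degree invariants'' as the assertion that $\nu_{12\dots n}$ and $\mu_{2\dots n,1}$ are finite type invariants of the same Vassiliev degree with equal top-degree weight systems, and your plan is to match symbols supported on a single caterpillar tree. This fails already at $n=3$: the paper's explicit formula reads $\nu_{123}(T)=\mu_{23,1}(T)+\lk_{13}(T)\lk_{32}(T)$, and the correction term $\lk_{13}\lk_{32}$, being a product of two degree-$1$ invariants, is a finite type invariant of degree exactly $2$ --- the same degree as $\nu_{123}$ and $\mu_{23,1}$ themselves. So the difference is \emph{not} of lower Vassiliev degree, the weight systems do \emph{not} coincide, and the leading symbol of $\nu_{123}$ is supported not only on the connected caterpillar tree but also on disconnected diagrams (the same phenomenon recurs at $n=4$ with terms such as $\mu_{34,1}\lk_{32}$ and $\lk_{14}\lk_{43}\lk_{32}$). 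The phrase ``lower degree'' in the conjecture refers to the Milnor length filtration --- polynomials in $\mu$-invariants with fewer indices --- not to the Vassiliev filtration, and the whole difficulty is precisely that these corrections live in the top Vassiliev degree. Consequently the non-chain descending states that you hope ``either drop in filtration or cancel'' in fact survive and produce exactly these product terms; no argument that kills them can be correct. Your Brunnian endgame is sound in outline (on a Brunnian closure all lower-length $\mu$-invariants vanish, so any correction that is a polynomial in them without constant term dies), but it can only be run after one establishes that the corrections have that specific form, which is the open part of the conjecture.
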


Currently, the general structure of the lower degrees correction terms
remains unclear.
Here are some explicit formulas for low degree invariants, which directly
follow from the results of \cite{KP, Po1} (after a straightforward translation
of formulas for $\nu_{123}(T)$ and $\nu_{1234}(T)$ to the language of
Gauss diagrams):

\begin{prop}
Let $T$ be a (possibly virtual) string link. Then
$$\nu_{12}(T)=\mu_{2,1}(T)=\lk_{21}(T)\ ,\quad
\nu_{123}(T)=\mu_{23,1}(T)+\lk_{13}(T)\lk_{32}(T)$$
\begin{multline*}\nu_{1234}(T)=\mu_{234,1}(T)+\mu_{34,1}(T)\lk_{32}(T)+
\mu_{24,1}(T)\lk_{43}(T)+\mu_{23,4}(T)\lk_{14}(T)+\\
\lk_{14}(T)\lk{43}(T)\lk_{32}(T)
\end{multline*}
\end{prop}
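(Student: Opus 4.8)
The plan is to unfold the definition $\nu_{12\dots n}(T)=c_{n-1}(T^\sigma)$ and to expand $c_{n-1}$ as the signed count $\sum_{S}\sgn(S)$ over descending $(n-1)$-states of a diagram of $T^\sigma$, then reinterpret each descending state as a combinatorial configuration of arrows in the Gauss diagram $G$ of $T$. First I would record precisely what the shifted ordering $\sigma=(246\dots 2n)$ forces on a descending state: a state $S$ of cardinality $n-1$ is coherent for $T^\sigma$ exactly when oriented smoothing of the $n-1$ chosen crossings reconnects the strands so that each resulting string runs between a matched pair $j_{2i-1},j_{2i}$, and the descending condition fixes which strand is the overpass at each smoothed crossing. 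Tracing the shifted order through the smoothed diagram then translates into explicit constraints on which strings meet at each chosen crossing and on their cyclic position along $T$. The outcome of this bookkeeping is a Gauss-diagram formula for $\nu_{12\dots n}(T)$ as a signed sum over subdiagrams of $G$ with prescribed endpoint patterns.

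For the base case $\nu_{12}(T)=c_1(T^\sigma)$ I would observe that a descending $1$-state is a single inter-string crossing: with only two strings, oriented smoothing necessarily reconnects them from the pattern $j_1\to j_4$, $j_3\to j_2$ into the coherent pattern $j_1\to j_2$, $j_3\to j_4$. The descending condition selects the overpass, and the signed count of such crossings is by definition $\lk_{21}(T)$. Since the lowest Milnor invariant is the linking number, $\lk_{21}(T)=\mu_{2,1}(T)$, which settles the first identity and fixes the pattern for the higher ones.

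For $\nu_{123}(T)=c_2(T^\sigma)$ and $\nu_{1234}(T)=c_3(T^\sigma)$ the translated Gauss-diagram formula splits into two kinds of contributions. The \emph{connected} configurations, in which the $n-1$ chosen arrows link all $n$ strings into a single cyclic chain under $\sigma$, reproduce precisely the Gauss-diagram formula for the Milnor invariant $\mu_{23,1}$ (respectively $\mu_{234,1}$) established in \cite{KP}. The remaining \emph{factorized} configurations, in which the chosen arrows decouple into groups each joining only two strings, contribute products of linking numbers; matching them against the formulas of \cite{KP, Po1} yields the correction $\lk_{13}(T)\lk_{32}(T)$ for $\nu_{123}$, and the three quadratic terms together with the cubic term $\lk_{14}(T)\lk_{43}(T)\lk_{32}(T)$ for $\nu_{1234}$.

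The hard part will be the exact accounting of these lower-degree correction terms, namely their signs and multiplicities. Since $\mu_{23,1}$ and $\mu_{234,1}$ are only defined modulo products of lower Milnor invariants, one must fix the specific Gauss-diagram representatives used in \cite{KP} and then verify that the factorized descending states of $T^\sigma$ contribute exactly the listed products, with signs coming both from $\sgn(S)$ and from the shifted ordering. The leading-term identification is comparatively mechanical once the Gauss-diagram translation of $c_{n-1}(T^\sigma)$ is in hand; it is the reconciliation of the factorized states with the precise normalization of the Milnor invariants in \cite{KP, Po1} that carries the genuine content, and where sign and indexing errors are easiest to make.
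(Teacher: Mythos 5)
Your proposal takes essentially the same route as the paper: the paper's entire justification is that these formulas ``directly follow from the results of \cite{KP, Po1} (after a straightforward translation of formulas for $\nu_{123}(T)$ and $\nu_{1234}(T)$ to the language of Gauss diagrams),'' which is precisely the translation-and-comparison scheme you outline, including the identification of the leading term with the Milnor-invariant formulas of \cite{KP} and of the factorized states with the products of linking numbers. The paper carries out none of the sign and normalization bookkeeping that you correctly flag as the real content, so your outline is, if anything, more explicit than the published argument.
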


Cases when $k\ge n$ are no less interesting. Here are some results
for small values of $n$ and $k$.
Denote by $V_2(T)$ an invariant of 2-component string links  introduced in
 \cite{Me}, and let $v_2$ be the Casson knot invariant, i.e. the second
 coefficient of the Conway polynomial.

\begin{prop}
For a $2$-string link $T$ with a standard ordering we have $c_2(T)=V_2(T)$.
This invariant is a splitting of the Casson knot invariant $v_2$, in the following
sense. For a long knot $K$, denote by $K^2=K\cup K'$ a $2$-string link
obtained from $K$ by a ``reverse doubling'', i.e. by taking a pushed-off copy
$K'$ of $K$ along a zero framing, and then reversing its orientation.
Then $v_2(K)=-\frac12 V_2(K^2)$.
\end{prop}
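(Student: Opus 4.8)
The plan is to establish the two assertions in turn, and to reduce the second one, via the first, to the combinatorial identity $c_2(K^2)=-2v_2(K)$. For the identification $c_2(T)=V_2(T)$ I would argue by finite type. By construction $c_2$ is a signed count of descending $2$-states, and switching a single crossing changes only those states that contain it; iterating this observation, the third finite difference of $c_2$ over three independent crossing changes vanishes, so $c_2$ is a GPV finite type invariant of degree at most $2$ (as already asserted in the introduction), while Meilhan's $V_2$ is of degree $2$ by its definition in \cite{Me}. Since the space of finite type invariants of degree $\le 2$ of $2$-string links with the standard ordering is finite dimensional, it then suffices to check the equality on a spanning family of the degree-$2$ graded piece, together with the common normalization $c_2=V_2=0$ on the trivial $2$-string link (Example \ref{ex:unlink}); the values of $c_2$ on such generating tangles are read off directly from the state sum. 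Alternatively, one compares the two Gauss diagram formulas arrow-configuration by arrow-configuration, using that the descending condition translates into a fixed set of admissible chord patterns.

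For the splitting formula I would compute $c_2(K^2)$ directly from a diagram. Starting from a diagram $D$ of the long knot $K$, build a diagram $D^2$ of $K^2$ by taking a parallel push-off $D'$ with reversed orientation and correcting the framing to zero. Each crossing of $D$ produces a $2\times 2$ block of four crossings of $D^2$: a self-crossing of $K$, a self-crossing of $K'$, and two mixed crossings between $K$ and $K'$ (the framing correction contributes only self-crossings of $K'$). Using the standard ordering (string $1=K$, string $2=K'$), the descending $2$-states of $D^2$ fall into types according to how the two smoothed crossings distribute among these classes. I expect the two self-self types to reproduce the descending $2$-states of the long knots $K$ and $K'$, each contributing $c_2=v_2(K)$ by Corollary \ref{cor:Conway} (Conway coefficients being invariant under reversal), for a subtotal $2v_2(K)$. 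The mixed type should contribute $-4v_2(K)$: pairs of mixed crossings are indexed by pairs of crossings of $D$, with the factor $2\times 2$ coming from the two mixed versions of each crossing and the sign $-1$ from the reversal of orientation on $K'$, which flips the relevant local writhes. The self-mixed cross terms are expected to cancel in pairs, as in the proof of Theorem \ref{thm:main}. Collecting these, $c_2(K^2)=2v_2(K)-4v_2(K)=-2v_2(K)$, which combined with $V_2=c_2$ gives $v_2(K)=-\tfrac12 V_2(K^2)$.

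The main obstacle is precisely this sign-and-multiplicity bookkeeping in the second part. One must characterize exactly which $2$-states of $D^2$ are simultaneously coherent (no closed components, correct endpoint labels) and descending, and confirm that smoothing two self-crossings of one string yields a single arc rather than an arc plus a loop exactly in the cases counted by $c_2(K)$. The delicate points are verifying that the self-mixed contributions genuinely cancel rather than leaving coherent residual states, and that the orientation reversal on $K'$ contributes the claimed factor $-1$ per mixed crossing while leaving the descending status otherwise intact, so that the coefficients assemble into $2-4=-2$. Diagram-independence of the intermediate counts is guaranteed a posteriori by the invariance of $c_2$ established in Theorem \ref{thm:main}, but the bijections used in the reduction, together with Lemma \ref{lem:prod}, should be arranged to respect it.
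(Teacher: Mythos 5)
Your proposal has genuine gaps in both halves, plus one concrete error in the setup. In the second (and harder) half, the entire mathematical content is deferred: the claims that the mixed--mixed states contribute $-4v_2(K)$ and that the self--mixed states cancel in pairs are only ``expected,'' and you yourself flag this sign-and-multiplicity bookkeeping as the main obstacle; appealing to the cancellation in the proof of Theorem \ref{thm:main} is not a substitute, since that cancellation is a local phenomenon specific to the $\O3$ configuration. Moreover, the bookkeeping as you set it up is wrong: correcting the push-off to zero framing means killing $\lk(K,K')$, which can only be done by inserting full twists \emph{between} $K$ and $K'$; these contribute mixed crossings, not ``only self-crossings of $K'$'' as you assert (kinks on $K'$ change its writhe, not the linking number). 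Consequently the mixed crossings do not all arise from the $2\times 2$ blocks over crossings of $D$, your indexing of mixed--mixed pairs by pairs of crossings of $D$ omits the twist region, and the claimed totals cannot be trusted as stated. The first half is likewise only a sketch: to conclude $c_2=V_2$ by finite-type considerations you would need to exhibit a spanning set for the space of degree-$\le 2$ invariants of $2$-string links --- which contains $\lk$, $\lk^2$, $v_2(T_1)$, $v_2(T_2)$ and $V_2$, so is several-dimensional --- and actually evaluate both invariants on it; none of this is carried out.

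For comparison, the paper's proof avoids all diagram bookkeeping. The identity $c_2(T)=V_2(T)$ is read off from Meilhan's Gauss diagram formula for $V_2$ (equation 3.5 of \cite{Me}), which matches the descending-state sum --- essentially your ``alternative'' route for the first half, but with the comparison actually available in the literature. For the splitting, equation 2.2 of \cite{Me} gives $V_2(T)=v_2(\bar{T}_{pl})-v_2(T_1)-v_2(T_2)$, where $\bar{T}_{pl}$ is the plat closure; applied to $T=K^2$, the plat closure bounds an embedded disk (the ribbon between $K$ and its reverse zero-framed push-off, closed up at the two ends), hence is the unknot, so $V_2(K^2)=-v_2(K)-v_2(K')=-2v_2(K)$ by reversal-invariance of $v_2$. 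If you insist on the direct computation, you would need to (i) include the twist-region crossings in the mixed class, (ii) prove the self--mixed cancellation by an explicit sign-reversing involution, and (iii) identify the mixed--mixed count with $-4v_2(K)$; at that point you would essentially be re-deriving Meilhan's relations rather than using them.
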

\begin{proof}
The equality $c_2(T)=V_2(T)$ follows from equation 3.5 of \cite{Me}.
Moreover, equation 2.2 of \cite{Me} implies that for any 2-string link
$T$ with strings $T_1$ and $T_2$ one has
$V_2(T)= v_2(\bar{T}_{pl})-v_2(T_1)-v_2(T_2)$,
where $\bar{T}_{pl}$ is a plat closure of $T$. Apply this equality to
$T=K^2$ and note that the plat closure $\bar{K^2}_{pl}$ of $K^2$
is the unknot, thus $V_2(K^2)=-v_2(K)-v_2(K')$. Also, $v_2$ is
preserved under an orientation reversal, so $V_2(K^2)=-2v_2(K)$.
\end{proof}
%The  theorem below follows from the explicit Gauss diagram formula of \cite{PV}
%for the Sato-Levine invariant (see \cite{AR}) by a straightforward calculation.
%\begin{thm}
%Let $T$ be a 2-string link with a standard ordering.
%Then $c_3(T)$ is the extended Sato-Levine invariant.
%\end{thm}

\subsection{GPV finite type invariants}
Finally, let us show that all invariants $c_n(T)$ are finite type invariants
of virtual tangles in the sense of GPV \cite{GPV}. Recall, that for virtual
links there are two different theories of finite type invariants.
Kauffman's theory of \cite{Ka} is based on crossing changes similarly to
the case of classical knots.
GPV theory introduced in \cite{GPV} is more restrictive in a sense that
every GPV invariant is also a Kauffman invariant, but not vice versa.
In GPV theory instead of crossing changes one uses a new operation,
special for the virtual knot theory: crossing virtualization. Namely, given
a real crossing in a diagram of a virtual link, we can convert it into a virtual
crossing (resulting in a new virtual diagram).

Let $D$ be a virtual tangle diagram with $n$ marked and ordered
real crossings $d_i$, $i=1,\dots,n$. Given an $n$-tuple
$I=\{i_1,\dots, i_n\}\in\{0,1\}^n$ of zeros and ones, denote by $D_I$ a
virtual tangle diagram obtained from $D$ by virtualization of all crossings
$d_k$ with $i_k=1$. Also, let $|I|=\sum_{k=1}^n i_k$. An invariant
$v$ of virtual tangles is called a GPV finite type invariant of degree less than
$n$, if an alternating sum of its values on all diagrams $D_I$ vanishes:
$$\sum_{I\in\{0,1\}^n}(-1)^{|I|}v(D_I)=0$$
for all diagrams $D$ and choices of crossings $d_i$.
If $v$ is of degree less than $n+1$, but not less than $n$, we say that $v$
is a GPV invariant of degree $n$.
A restriction to classical tangles of a GPV invariant of degree $n$ is a
Vassiliev invariant of degree less than $n$.
\begin{rem}
An open (an highly non-trivial) conjecture is whether for long knots the
opposite is true, i.e., whether any Vassiliev invariant of classical long
knots can be extended to a GPV invariant. A positive solution would
imply that Vassiliev invariants classify knots.
\end{rem}

\begin{prop}
For any $n\in{\mathbb N}$, the invariant $c_n(T)$ is a GPV invariant
of degree $n$.
\end{prop}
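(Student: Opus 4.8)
The plan is to show that $c_n(T)$ is a GPV invariant of degree exactly $n$ by analyzing directly how the defining state sum behaves under crossing virtualization. Recall that $c_n(D)=\sum_{S\in\S_n(D)}\sgn(S)$, where each descending $n$-state $S$ is a choice of $n$ real crossings whose coherent descending smoothing produces the standard ordering with no closed components. First I would fix a diagram $D$ together with a distinguished $(n+1)$-tuple of real crossings $d_0,\dots,d_n$, and examine the alternating sum $\sum_{I\in\{0,1\}^{n+1}}(-1)^{|I|}c_n(D_I)$. The key observation is that virtualizing a crossing $d_k$ simply removes it from the pool of crossings available to enter a state: a descending state of $D_I$ is a set of $n$ \emph{real} (non-virtualized) crossings of $D_I$, and because smoothing and the descending/coherence conditions are entirely local to the chosen crossings, each descending state of $D_I$ corresponds canonically to a descending state of $D$ that avoids all the virtualized crossings $\{d_k: i_k=1\}$, with the same sign.

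With this correspondence in hand, the next step is a standard inclusion–exclusion argument. For a fixed descending $n$-state $S$ of $D$, let me count its contribution to $\sum_I(-1)^{|I|}c_n(D_I)$. The state $S$ survives in $D_I$ (i.e. none of its crossings are virtualized) precisely when $i_k=0$ for every $k$ such that $d_k\in S$. Summing $(-1)^{|I|}$ over all such $I$ factors over the coordinates: coordinates forced to $0$ contribute a factor $1$, while each free coordinate $d_k\notin S$ contributes a factor $(1-1)=0$. Hence the total contribution of $S$ vanishes as soon as there exists at least one crossing among $d_0,\dots,d_n$ that does \emph{not} lie in $S$. Since $|S|=n<n+1$, by pigeonhole at least one of the $n+1$ chosen crossings is not in $S$, so every state contributes zero. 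This proves $\sum_{I\in\{0,1\}^{n+1}}(-1)^{|I|}c_n(D_I)=0$, i.e. $c_n$ has degree at most $n$.

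To establish that the degree is exactly $n$ (not smaller), I would exhibit a diagram $D$ with a marked $n$-tuple of crossings for which $\sum_{I\in\{0,1\}^n}(-1)^{|I|}c_n(D_I)\ne0$. The clean choice is a diagram whose only descending $n$-state uses precisely the $n$ marked crossings: then by the same factorization the alternating sum equals $\pm1$. The long iterated Hopf link and the long trefoil computed in the examples already furnish such configurations (e.g. the unique nontrivial $2$-state $\{2,3\}$ of the long trefoil, or the $3$-state $\{2,3,4\}$ of $D_3$), so I would point to one of these rather than construct a new one.

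The main obstacle, and the point requiring genuine care, is the claim that a descending $n$-state of the virtualized diagram $D_I$ matches a descending $n$-state of $D$ missing the virtualized crossings \emph{with identical sign and identical coherence/descending status}. The subtlety is that virtualizing $d_k$ changes the global diagram, hence changes the curves one traverses when reading off coherence and the descending condition, even for states that avoid $d_k$. I would argue that virtualization affects neither the orientation-respecting smoothings of the other crossings, nor the connectivity pattern of the resulting strings, nor the over/under data at the retained crossings; since coherence and descendingness are read off from the smoothed diagram $D(S)$ and the over/under information local to $S$, and since a virtual crossing behaves exactly like the formal virtual crossings already present in circuit diagrams, these properties are preserved. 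Making this local-invariance statement fully rigorous—essentially that $\S_n(D_I)$ is in sign-preserving bijection with $\{S\in\S_n(D): S\cap\{d_k:i_k=1\}=\emptyset\}$—is where the real work lies; once it is granted, the inclusion–exclusion collapse is immediate.
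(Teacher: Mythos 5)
Your argument is correct, but it takes a different route from the paper. The paper's proof is essentially a citation: it invokes the theorem of \cite{GPV} that any invariant given by an arrow-diagram formula in which all diagrams have at most $n$ arrows is a GPV invariant of degree at most $n$, and then remarks that the state sum defining $c_n$ is a straightforward translation of such a formula. You instead reprove the relevant statement from scratch in the state-sum language: the sign-preserving bijection between $\S_n(D_I)$ and $\{S\in\S_n(D):\ S\cap\{d_k:i_k=1\}=\emptyset\}$, followed by the inclusion--exclusion collapse (each free coordinate contributing a factor $1-1=0$, and pigeonhole guaranteeing a free coordinate since $|S|=n<n+1$), is exactly the mechanism underlying the cited GPV theorem. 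What your route buys: it is self-contained, and, unlike the paper's proof, it actually addresses the second half of the claim --- that the degree is \emph{not} less than $n$ --- which the paper silently omits (its proof only establishes the upper bound). Your justification of the key bijection is also sound: coherence and descendingness of a state $S$ are read off from the smoothed skeleton, whose connectivity and traversal order are unchanged by virtualization, and from the sign and over/under data at the crossings of $S$ only, which are untouched when crossings outside $S$ are virtualized.

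One small repair is needed in your exactness step. The witnesses you cite (the long trefoil, the iterated Hopf link $D_3$) only cover $n=1,2,3$; for arbitrary $n$ you need, for each $n$, some diagram carrying a descending $n$-state $S_0$. Note that uniqueness of that state is not required: marking exactly the crossings of $S_0$, the alternating sum over $\{0,1\}^n$ picks out only those descending $n$-states containing all $n$ marked crossings, i.e.\ only $S_0$ itself, so the sum equals $\sgn(S_0)=\pm1$ automatically. Such diagrams exist in every degree: for instance, the tangles $T_2$ of Figure \ref{fig:mu}b satisfy $c_{n-1}(T_2^{\sigma})=\nu_{12\dots n}(T_2)=1\neq 0$ for every $n$, hence possess descending states of every size. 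With such a family plugged in, your proof is complete.
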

\begin{proof}
In \cite{GPV} it is shown that any invariant given by an arrow
formula, where all arrow diagrams contain at most $n$ arrows,
is a GPV invariant of degree at most $n$.
A straightforward translation of this statement into the language of
state sums of Sections \ref{sub:states}-\ref{sub:conway} implies
that any invariant defined by a state sum, where the states contain
at most $n$ crossings, is of degree at most $n$.
\end{proof}

\subsection{Conway-type representation of a braid group}
If we restrict ourselves to braids instead of all $n$-tangles, we
can apply results of the previous section to obtain a following
representation of the braid group $B_n$ on $n$ strands.

Let $R_n=\mathbb{Z}S_n[z]$ be polynomials in one variable $z$ with
coefficients in the group ring ${\mathbb Z}S_n$ of the symmetric
group $S_n$. Denote by $\sigma_i$ and $s_i$, $i=1,\dots,n-1$ the
standard generators of $B_n$ and $S_n$ respectively.

\begin{prop}
For $s\in S_n$, define
$$\hat{\sigma}_i (s)=\begin{cases}s_i s & \text{if } s(i)<s(i+1)\\
                s_i s + s z & \text{if } s(i)>s(i+1)\,,
               \end{cases}
\hat{\sigma}_i^{-1} (s)=\begin{cases}s_i s & \text {if } s(i)>s(i+1)\\
                s_i s - s z & \text{if } s(i)<s(i+1)\,,
               \end{cases}
$$
Set $\hat{\sigma}_i(sz^k)=\hat{\sigma}_i(s)z^k$ and extend it to
$R_n$ by linearity. This assignment defines an action of the braid
group $B_n$ on $R_n$. This representation satisfies $\hat{\sigma}_i
(s)-\hat{\sigma}_i^{-1}(s)=z s$, thus factors through the Hecke
algebra.
\end{prop}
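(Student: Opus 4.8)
The plan is to verify, in order, that $\hat{\sigma}_i$ is invertible with inverse $\hat{\sigma}_i^{-1}$, that the two families of braid relations hold, and that the quadratic (Hecke) relation follows; the last point then yields the factorization through the Hecke algebra. It is convenient first to rewrite the generators in operator form. Writing $S_i$ for the linear operator $s\mapsto s_i s$ and $D_i$ for the diagonal projector with $D_i(s)=s$ when $s(i)>s(i+1)$ and $D_i(s)=0$ otherwise, one has $\hat{\sigma}_i=S_i+z\,D_i$ and $\hat{\sigma}_i^{-1}=S_i-z(\mathrm{id}-D_i)$. The only structural facts I will need are $S_i^2=\mathrm{id}$, $D_i^2=D_i$, the braid and far-commutation relations among the $S_i$ inside $S_n$, and the key toggling identity $D_iS_i=S_i(\mathrm{id}-D_i)$, which records that multiplication by $s_i$ swaps the entries in positions $i$ and $i+1$ and hence reverses the descent at $i$.

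Granting this formalism, two of the three verifications become one-line operator computations. For invertibility I would expand $\hat{\sigma}_i\hat{\sigma}_i^{-1}=(S_i+zD_i)(S_i-z(\mathrm{id}-D_i))$ and collect by powers of $z$: the $z^0$ part is $S_i^2=\mathrm{id}$, the $z^2$ part vanishes because $D_i(\mathrm{id}-D_i)=0$, and the $z^1$ part cancels precisely by the toggling identity $D_iS_i=S_i(\mathrm{id}-D_i)$. The quadratic relation is equally quick: a direct check of the two cases $s(i)\lessgtr s(i+1)$ gives $\hat{\sigma}_i^2=z\,\hat{\sigma}_i+\mathrm{id}$, equivalently $\hat{\sigma}_i-\hat{\sigma}_i^{-1}=z\,\mathrm{id}$, which is exactly the assertion that the action descends to the Hecke algebra $H_n$ in the Conway normalization. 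The far-commutation relation $\hat{\sigma}_i\hat{\sigma}_j=\hat{\sigma}_j\hat{\sigma}_i$ for $|i-j|\ge 2$ is also immediate, since then $S_i,S_j$ commute, $D_i,D_j$ commute (both being diagonal), and $S_i$ leaves the descent at $j$ untouched (disjoint positions), so $S_iD_j=D_jS_i$ and symmetrically.

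The real content, and the step I expect to be the main obstacle, is the braid relation $\hat{\sigma}_i\hat{\sigma}_{i+1}\hat{\sigma}_i=\hat{\sigma}_{i+1}\hat{\sigma}_i\hat{\sigma}_{i+1}$. Expanding both sides via $\hat{\sigma}=S+zD$ and comparing coefficients of $z^0,\dots,z^3$, the $z^0$ terms agree by the braid relation in $S_n$, and the $z^3$ terms agree because $D_i,D_{i+1}$ are commuting projectors, so $D_iD_{i+1}D_i=D_iD_{i+1}=D_{i+1}D_iD_{i+1}$. What remains are the $z^1$ and $z^2$ identities among the mixed operators; the difficulty is that $S_i$ and $D_{i+1}$ (and $S_{i+1}$ and $D_i$) do not commute, since swapping positions $i,i+1$ alters the descent at $i+1$. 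However, every operator appearing moves or reads only positions $i,i+1,i+2$, so both sides act within the local $S_3$ on these three positions and are completely determined by the relative order of $s(i),s(i+1),s(i+2)$. I would therefore finish by tabulating the six order patterns and tracking how the $z$-corrections accumulate along each of the two words; this bookkeeping is routine but is where all the genuine work sits.

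A conceptual alternative would bypass the braid-relation computation entirely. The operator $\hat{\sigma}_i$ is precisely the map induced on the ordering-indexed space $R_n$ by stacking a single elementary crossing on strands $i,i+1$: by the multiplicativity of Lemma \ref{lem:prod} together with the values of $\D$ on the one-crossing tangle from Examples \ref{ex:unlink} and \ref{ex:O2} (which are exactly $1$, $z$, $0$, $1$ according to the induced ordering), the $z$-correction matches the count of descending states. Once this identification is established, the braid relation follows from the invariance of $\D$ under the third Reidemeister move (Theorem \ref{thm:main}), invertibility from the second, and the Hecke relation from the skein relation of Proposition \ref{prop:skein}. In this approach the obstacle merely relocates: the work becomes pinning down the conventions in the identification -- on which side crossings are stacked and the precise descent convention -- after which all the relations are inherited for free.
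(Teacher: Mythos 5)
Your proposal is correct, but your primary route is genuinely different from the paper's; your ``conceptual alternative'' at the end is, in fact, the paper's actual proof. The paper argues topologically: it orders the inputs of the elementary tangle $\sigma_i$ by $s$, observes that exactly two output orderings admit a descending state (the empty state for the swapped ordering, the one-crossing state for the unswapped one), and reads off the formula for $\hat{\sigma}_i(s)$ from the state sum; the representation property and the Hecke relation are then inherited for free from Lemma \ref{lem:prod} together with Theorem \ref{thm:main} (invariance under $\O2$, $\O3$) and from Proposition \ref{prop:skein}, so the paper never verifies any relation by hand. Your primary route instead checks everything inside $\operatorname{End}(R_n)$: the decomposition $\hat{\sigma}_i=S_i+zD_i$, the identities $S_i^2=\mathrm{id}$, $D_i^2=D_i$, $D_iS_i=S_i(\mathrm{id}-D_i)$, far commutation, and a local case analysis for the braid relation. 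This buys a self-contained, elementary proof of the proposition, independent of the tangle machinery and supplying details the paper leaves implicit; what it costs is exactly the bookkeeping the paper's approach avoids.

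Two caveats. First, your braid-relation step is a plan, not a proof: the reduction is valid (every operator involved permutes only positions $i,i+1,i+2$ and reads only descents at those positions, so both sides are determined by the relative order of $s(i),s(i+1),s(i+2)$), and the six cases do work out, but as written the table is deferred, and it is the only place where genuine work remains. Second, a convention point your justification glosses over: under the standard composition convention, left multiplication by $s_i$ swaps the \emph{values} $i,i+1$, not the entries in positions $i,i+1$, and with that reading the toggling identity $D_iS_i=S_i(\mathrm{id}-D_i)$ --- and indeed the invertibility claim itself --- fails (try $s=s_2$, $i=1$). Both your identities and the proposition are correct under the reading that $s_is$ denotes $s$ with the entries in positions $i$ and $i+1$ interchanged, which is the convention the paper's own proof (see Figure \ref{fig:sigma_i}) forces; you should state this explicitly, since it is precisely what makes your key identity true.
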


\begin{proof}
Use $s\in S_n$ to order inputs of all strings on the top of an
elementary tangle $\sigma_i$ by $2s_1-1,2s_2-1,\dots,2s_n-1$.
The only two orderings of the outputs of $\sigma_i$ for which
there exist a descending state are
$2s_1,\dots,2s_{i-1},2s_{i+1},2s_i,\dots,2s_n$ (for which the
trivial state is descending) and
$2s_1,\dots,2s_{i-1},2s_i,2s_{i+1},\dots,2s_n$ $s_i s$
(for which 1-state which consists of the only crossing of $\sigma_i$)
is descending. See Figure \ref{fig:sigma_i}.
This immediately leads to the expression for $\hat{\sigma}_i (s)$.
The expression for $\hat{\sigma}_i^{-1} (s)$ is obtained in the same way.
\end{proof}
\begin{figure}[htb]
%\vspace{1in}
\centerline{\includegraphics[width=4.4in]{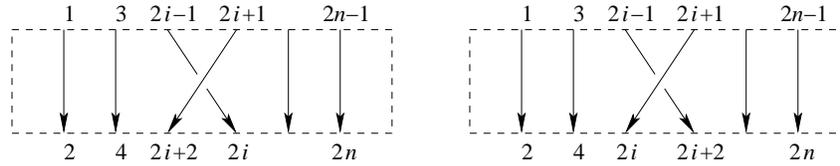}}
\caption{\label{fig:sigma_i} Two possible orderings of an elementary tangle}
\end{figure}

It would be interesting to identify this representation with
(a quotient of) a known representation of the braid group.

The above representation may be used to recover the Conway
polynomial as follows. For $s\in S_n$ and $\beta\in B_n$, denote by
$\hat{\beta}[k](s)$ the coefficient of $z^k$ in $\hat{\beta}(s)$;
this defines a linear operator on ${\mathbb Z}S_n$. Composing it
with a shift $\tau(i)=i+1$, $i=1,\dots, n-1$ , $\tau(n)=1$ and
taking the trace $\operatorname{tr}$ in ${\mathbb Z}S_n$, we define
$$c_k(\beta)=\operatorname{tr}(\tau\cdot\hat{\beta}[k])$$

\begin{prop}
Let $L$ be a link, obtained as a closure of a braid $\beta$. Then
$c_k(\beta)$ is the coefficient of $z^k$ in the Conway-Alexander
polynomial $\D(L)$ of $L$.
\end{prop}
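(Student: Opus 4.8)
The plan is to recognize the twisted trace as nothing but the Conway state sum of Section~\ref{sub:conway} evaluated on the closure of $\beta$, and then to invoke Corollary~\ref{cor:Conway}. First I would prove, by induction on the length of a braid word $\beta=\sigma_{i_1}^{\pm1}\cdots\sigma_{i_m}^{\pm1}$, that $\hat\beta$ is exactly the transfer operator of this state sum. A braid is the vertical stack of the elementary tangles $\sigma_{i_j}^{\pm1}$, i.e.\ an iterated operadic composition, so Lemma~\ref{lem:prod} reduces $\D$ of the stack to a sum over shuffles of orderings of products of $\D$ of the pieces; read bottom-to-top this is precisely composition of the maps $\hat\sigma_{i_j}^{\pm1}$ supplied by the preceding proposition. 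Tracking the ordering through the stack, I would show that for an input ordering encoded by $s\in S_n$ (inputs labelled $2s_1-1,\dots,2s_n-1$) one has $\hat\beta(s)=\sum_{s'}P_{s,s'}(z)\,s'$, where the coefficient of $z^k$ in $P_{s,s'}(z)$ is the signed count $\sum_{S}\sgn(S)$ over the descending $k$-states $S$ of the braid diagram compatible with the input ordering $s$ and the output labelling $2s'_1,\dots,2s'_n$. Thus $\hat\beta[k]$ is the matrix of signed counts of descending $k$-states with prescribed boundary orderings: the $z^0$ part is left multiplication by the underlying permutation of $\beta$ (the empty state), and each factor of $z$ records one oriented smoothing.

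Next I would identify the twisted trace with the closure. The braid-type closure glues the output at the top to the input at the bottom; to turn the closed diagram into an ordered one-string tangle as in Corollary~\ref{cor:Conway}, the $2n$ endpoints must be labelled so that the ordering is coherent as one follows the strands in order. The cyclic shift $\tau=(1\,2\cdots n)$ is exactly the bookkeeping that, upon crossing a closure arc, passes from the output label of one strand to the input label of the next, while the trace enforces that the ordering one returns to after going once around coincides with the ordering one started from. Carrying this out, the diagonal of $\tau\cdot\hat\beta[k]$ collects $\sgn(S)$ over precisely the descending $k$-states of the closed diagram, so that $\operatorname{tr}(\tau\cdot\hat\beta[k])=c_k(\widehat\beta)$, where $\widehat\beta$ denotes the closure opened along one component into a one-string tangle (by Corollary~\ref{cor:Conway} the choice of component is immaterial). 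Corollary~\ref{cor:Conway} then gives $\sum_k c_k(\widehat\beta)\,z^k=\D(L)$, and comparing coefficients of $z^k$ finishes the argument.

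The main obstacle is this second step: matching the algebraic trace with the geometric coherent ordering of the closure without over- or under-counting. The two delicate points are (a) that $\tau$ must be the full $n$-cycle, so that the single cyclic traversal used to test whether a state is descending threads through all strands in the correct order; and (b) that the correct normalization of $\operatorname{tr}$ is the canonical symmetrizing trace of $\Z S_n$ (equivalently $\tfrac1{n!}$ times the operator trace on left multiplications), so that each descending state of the closure is weighted once rather than once per starting label. Already the closures of $\sigma_1\in B_2$ and of $\sigma_1\sigma_2\in B_3$ (both unknots) show that the unnormalized operator trace would introduce a spurious factor of $n!$, so establishing the right normalization and the resulting integrality is where the real work lies. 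As a cross-check, and a more robust route, I would verify that $\beta\mapsto\operatorname{tr}(\tau\cdot\hat\beta)$ is invariant under conjugation (the trace property, matching Markov's first move) and under stabilization $\beta\mapsto\beta\sigma_n^{\pm1}$ (matching Markov's second move, which is precisely what forces the normalized trace), and that the Hecke relation $\hat\sigma_i-\hat\sigma_i^{-1}=z$ reproduces the Conway skein relation of Proposition~\ref{prop:skein}; together with the normalization on the unknot and Markov's theorem this determines the invariant and forces it to agree with $\D(L)$.
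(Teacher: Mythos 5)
Your overall architecture is the same as the paper's: the paper's entire proof is ``Follows from Corollary~\ref{cor:Conway}'', and your first two steps (identifying $\hat\beta$ as the transfer operator of the state sum via Lemma~\ref{lem:prod}, then matching the twisted trace with descending states of the closure) are exactly the content that the paper leaves implicit. However, the point you yourself flag as ``where the real work lies'' --- the normalization of the trace --- is resolved incorrectly, and this is a genuine error. The raw operator trace does \emph{not} over-count by $n!$, and each descending state is \emph{not} counted ``once per starting label''. A term of $\operatorname{tr}(\tau\cdot\hat\beta[k])$ is a pair (input labelling $s$, descending state $S$) whose boundary labels are compatible with the closure arcs; for such a pair the smoothed closed diagram is a single circle passing through all $n$ strands, and once you choose \emph{which closure arc} carries the transition from label $2n$ back to label $1$ (i.e.\ where the circle is cut), coherency determines the entire labelling $s$ uniquely. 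So each state is picked up once per cut point, i.e.\ at most $n$ times, and since by Corollary~\ref{cor:Conway} every cut of the same closure yields the same $c_k$, one gets $\operatorname{tr}(\tau\cdot\hat\beta[k])=\sum_{\text{cuts}}c_k(T_{\text{cut}})=n\cdot c_k(L)$: the spurious factor is $n$, not $n!$.

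Your claimed verification on $\sigma_1\sigma_2\in B_3$ is a miscomputation caused by the product convention: in the paper's formula $\hat\sigma_i(s)=s_is$, the product $s_is$ must mean ``apply $s_i$ first, then $s$'' (this is forced by the output orderings $2s_1,\dots,2s_{i+1},2s_i,\dots,2s_n$ listed in the proof of the representation proposition). With that convention the only permutations contributing to $\operatorname{tr}(\tau\cdot\widehat{\sigma_1\sigma_2}[0])$ are the three transpositions, and the trace equals $3=n$, not $6=n!$; reading $s_is$ as composition in the other order is what produces $6$, but then $\hat\sigma_i$ no longer encodes the geometric state count. Your proposed symmetrizing normalization $\frac1{n!}\operatorname{tr}$ therefore fails for every $n\ge 3$: for the trefoil realized as the closure of $\sigma_1^3\sigma_2\in B_3$ the traces in degrees $0$ and $2$ both equal $3$, so your normalization gives $\frac12(1+z^2)$ --- not even an integer polynomial --- instead of $1+z^2$, whereas $\frac1n\operatorname{tr}$ gives the correct answer. (The Markov-move cross-check you propose at the end is sound and would in fact have exposed this: stabilizing $\sigma_1\in B_2$ to $\sigma_1\sigma_2\in B_3$ changes the raw trace from $2$ to $3$, forcing the $1/n$ rather than the $1/n!$ normalization.)
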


\begin{proof}
Follows from Corollary \ref{cor:Conway}.
\end{proof}

%
%\begin{figure}[ht]
%\centerline{\includegraphics[width=4.7in]{string.eps}}
%\caption{String link and its closure} \label{fig:tangles}
%\end{figure}
%

%\Addresses
\end{document}